\newcommand{\Z}{\mathbb Z}
\newcommand{\N}{\mathbb N}
\newcommand{\R}{\mathbb R}
\newcommand{\cok}{\textnormal{cok}} 
\newcommand{\p}{\mathcal P}
\newcommand{\MP}{\mathcal M_{\mathcal P}}
\newcommand{\ME}{\mathcal M^{\textnormal{sq}}}
\newcommand{\MEdelta}{\mathcal M^{\textnormal{sq}, \delta} }
\newcommand{\MPE}{\mathcal M_{\mathcal P}^{\textnormal{sq}} } 
\newcommand{\Mdelta}{\mathcal M^{\textnormal{sq}, \delta} } 
\newcommand{\MPdelta}{\mathcal
  M_{\mathcal P}^{\textnormal{sq}, \delta} }
\newcommand{\MPdeltaess}{\mathcal
  M_{\mathcal P}^{\textnormal{sq}, \delta, \textnormal{ess}} }
\newcommand{\SEP}{\textnormal{SE}_{\mathcal P}}
\newcommand{\slpnz}{\textnormal{SL}_{\mathcal P}(n,\mathbb Z )}
\numberwithin{equation}{section}
 \newtheorem{thm}[equation]{Theorem}
 \newtheorem{prop}[equation]{Proposition}
 \newtheorem{lem}[equation]{Lemma}
\theoremstyle{definition}
 \newtheorem{de}[equation]{Definition}
 \newtheorem{rem}[equation]{Remark}
 \newtheorem{qu}[equation]{Question}
 \newtheorem{standingnotation}[equation]{STANDING NOTATION}
\newtheorem{notation}[equation]{Notation}
\title[Shift equivalence implies flow equivalence]{Shift equivalence implies flow equivalence for shifts of finite type}
\author{Mike Boyle}
\address{University of Maryland, College Park MD, U.S.A.}
\email{mmb@math.umd.edu} 
\subjclass{}
\subjclass[2020]{Primary 37B10; Secondary 46L35.}
\keywords{shift equivalence, flow equivalence, shift of finite type,
  C*-algebras}
\begin{document}

\begin{abstract} Shifts of finite type  defined from 
  shift equivalent matrices must be flow equivalent.
  \end{abstract}
\maketitle
\tableofcontents


\section{Introduction}

A (self)homeomorphism $h: X\to X$ induces a ``suspension'' flow
on its mapping torus. Homeomorphisms $h,h'$ are {\it flow
  equivalent} if there exists a homeomorphism between their mapping
tori which takes flow orbits to flow orbits, respecting the
direction of the flow. Homeomorphisms $h,h'$ are {\it eventually
  conjugate} if for all but finitely many integers $n$, the
homeomorphisms $h^n$ and $(h')^n$ are
conjugate (meaning topologically conjugate, i.e. isomorphic
as topological dynamical systems). 

The purpose of this article is to  prove the following result (in which
$\Z_+$ denotes the semiring of nonnegative integers). 
\begin{thm}\label{maintheorem} 
Let 
  $A,B$ be square  nonnilpotent matrices with entries in $\Z_+$,
defining edge shifts of finite type $\sigma_A$, $\sigma_B.$ 
If  $A$ and $B$ are  shift equivalent over $\Z_+$,
then  $\sigma_A$ and $\sigma_B$ are flow equivalent.
\end{thm}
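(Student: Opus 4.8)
The plan is to extract from a shift equivalence over $\Z_+$ the finer algebraic data that classifies flow equivalence, the passage being made through the polynomial ring $\Z[t]$ and the specialization $t=1$.

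\emph{Setup and reductions.} Since $A$ and $B$ are nonnilpotent, $\sigma_A$ and $\sigma_B$ are nonempty. I replace $A$ and $B$ by their essential parts, which changes neither the shift equivalence class over $\Z_+$ nor the flow equivalence class of the shift; since adjoining or deleting a zero state is a flow equivalence, the sizes of $A$ and $B$ may moreover be adjusted at will. Conjugating by permutation matrices puts $A$ and $B$ in block upper triangular form with irreducible diagonal blocks, indexed by partially ordered sets. A structural point, to be derived from the fact that shift equivalence over $\Z_+$ preserves the ordered dimension triple $(\mathcal G_A,\mathcal G_A^+,\delta_A)$ (Krieger), is that $A$ and $B$ may then be taken to be $\p$-matrices over a single common poset $\p$ carrying the same ideal filtration, and the given shift equivalence to lie in $\SEP$.

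\emph{The polynomial bridge.} I use the classical identification of shift equivalence with an equivalence of the polynomial matrices $I-tA$ and $I-tB$ over $\Z[t]$: after stabilization by zero states, $A$ and $B$ are shift equivalent over $\Z$ precisely when $I-tA$ and $I-tB$ are $\mathrm{GL}(\Z[t])$-equivalent, equivalently when $\cok(I-tA)\cong\cok(I-tB)$ as $\Z[t]$-modules. In the $\p$-equivariant, positive setting of the first step this refines to an equivalence $U(t)(I-tA)V(t)=I-tB$ with $U(t),V(t)\in\slpzt$, implemented by positive elementary moves. Specializing $t\mapsto 1$ gives $U(1),V(1)\in\slpz$ with $U(1)(I-A)V(1)=I-B$, so that $I-A$ and $I-B$ become $\slpz$-equivalent as $\p$-matrices; keeping track of the nonnegativity forced by the hypothesis on $A$ and $B$ through this construction, one concludes that $A$ and $B$ are joined by the flow-equivalence moves---$\p$-respecting state splittings together with symbol expansions and diagonal additions---recorded by $\SSEPd$.

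\emph{Conclusion, and the main obstacle.} It remains to invoke the classification of flow equivalence. On a single irreducible block this is Franks' theorem: shift equivalence over $\Z_+$ forces $\cok(I-A)\cong\cok(I-B)$, and, since the nonzero spectrum together with its Jordan structure is a shift equivalence invariant, it forces $\det(I-A)=\det(I-B)$ outright; with the remark that any two mapping tori of single periodic orbits are flow equivalent, this settles the irreducible case immediately. In general, the $\p$-equivariant classification of flow equivalence for reducible shifts of finite type (Huang; or as developed in the preceding sections, in terms of Bowen--Franks data for the components and the maps among them, equivalently the $K$-theory of the associated Cuntz--Krieger algebras) shows that $\SSEPd$-equivalent $\p$-matrices define flow equivalent shifts of finite type; hence $\sigma_A$ and $\sigma_B$ are flow equivalent. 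I expect the main obstacle to be the reducible case, and within it two points demanding real care: that a shift equivalence over $\Z_+$ genuinely respects one common component poset with its ideal filtration, and that the $\slpzt$-equivalence of $I-tA$ and $I-tB$ survives the specialization $t\mapsto1$ with both its $\p$-block structure and its positivity/diagonal decoration intact; once the problem is reduced to a single irreducible component, the argument is essentially immediate.
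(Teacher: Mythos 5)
Your overall route is the same as the paper's (reduce to a block triangular form with irreducible diagonal blocks over a common poset $\p$, pass to an equivalence of $I-tA$ and $I-tB$ over $\Z[t]$ compatible with $\p$, set $t=1$, and invoke the classification of flow equivalence for reducible SFTs), but there is a genuine gap at the decisive step. After specializing to an $\slpz$-equivalence of stabilizations of $I-A$ and $I-B$, you assert that ``keeping track of the nonnegativity forced by the hypothesis'' upgrades this to a positive equivalence ($\SSEPd$), and then you apply only the easy direction that positive moves give flow equivalence. That upgrade is not bookkeeping: it is precisely the content of the classification theorem of \cite{BPos2002}, and its hypothesis is not mere $\slpz$-equivalence but an equivalence that \emph{respects cycle components and is positive on cycle components} (the induced isomorphism of the cokernel $\Z$'s attached to essentially cyclic diagonal blocks must be order-preserving). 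Your proposal never identifies this condition, and without it the argument fails: matrices can be shift equivalent over $\Z$ (hence stably $\slpz$-equivalent) with $\sigma_A$, $\sigma_B$ not flow equivalent when cycle components are present, as the paper notes in its closing remark. Relatedly, your claim that the $\slpzt$-equivalence is ``implemented by positive elementary moves'' is false as stated: the implementing matrices in the PSE equation contain $-R$ and $-t^{\ell}S$, so they are not nonnegative, and positivity must be extracted differently.

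What the paper does at exactly this point, and what your proposal lacks, is an explicit equivalence: the PSE equation puts the left-hand implementing factors in the specific forms $\left(\begin{smallmatrix} I&0\\S&I\end{smallmatrix}\right)$ (nonnegative) and $\left(\begin{smallmatrix} I&-R\\0&I\end{smallmatrix}\right)$ (inverse of the nonnegative $\left(\begin{smallmatrix} I&R\\0&I\end{smallmatrix}\right)$), and a separate lemma (Proposition \ref{esscyclicpos}, resting on the computation of $\cok(I-A)$ for essentially cyclic $A$ in Proposition \ref{esscyclic}) shows that a nonnegative implementing matrix induces a positive cokernel isomorphism on cycle components; only then does Theorem \ref{feresult} apply. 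You would need to supply this verification (or an equivalent one) to close the argument. A secondary, more minor point: the common-poset claim is not obtained in the paper from Krieger's dimension triple but by a direct combinatorial argument (Proposition \ref{fromSEtoSEP}) showing the shift equivalence matrices $R,S$ themselves admit compatible $\p$-partitions, which is also needed so that the PSE equation is a $\p$-partitioned equation; simply knowing the posets are isomorphic is not enough.
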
 
The shifts $\sigma_A$ and $\sigma_B$ are eventually conjugate 
if and only if $A$ and $B$ are shift equivalent over
$\Z_+$  \cite[Theorem 7.5.15]{LindMarcus2021},  
and every shift of finite type (SFT) is topologically conjugate
to an edge SFT \cite[Theorem 2.3.2]{LindMarcus2021}.
 Thus Theorem \ref{maintheorem}
can be expressed in another form:

\begin{thm}\label{eventualtheorem}
  For shifts of finite type,
  eventual conjugacy implies flow equivalence.
  \end{thm}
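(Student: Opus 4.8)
Theorem~\ref{eventualtheorem} is a reformulation of Theorem~\ref{maintheorem}, as the introduction already indicates, so the plan is to record the (routine) reduction and then describe how I would attack the substantive statement, Theorem~\ref{maintheorem}. For the reduction: given eventually conjugate shifts of finite type $T$ and $T'$, present them up to topological conjugacy as edge shifts $\sigma_A,\sigma_B$ with $A,B$ over $\Z_+$ \cite{LindMarcus2021}. Eventual conjugacy is a conjugacy invariant, so $\sigma_A$ and $\sigma_B$ are eventually conjugate; if $T$ is empty then so is $T'$ and there is nothing to prove, and otherwise $A$ and $B$ are nonnilpotent, so by the equivalence quoted in the introduction they are shift equivalent over $\Z_+$ and Theorem~\ref{maintheorem} applies. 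Finally, flow equivalence is preserved by topological conjugacy (a conjugacy induces a flow-respecting homeomorphism of mapping tori), so $\sigma_A\sim_{FE}\sigma_B$ yields $T\sim_{FE}T'$.

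For Theorem~\ref{maintheorem} I would first extract the invariants forced by a shift equivalence over $\Z_+$ — in fact over $\Z$ — say with lag $\ell$ and bridges $R,S$ satisfying $AR=RB$, $SA=BS$, $RS=A^{\ell}$, $SR=B^{\ell}$. From $(I-A)R=R(I-B)$ one gets an induced map $\overline R\colon\cok(I-B)\to\cok(I-A)$, and likewise $\overline S$; since $I-A\equiv 0$ forces $A\equiv I$, hence $A^{\ell}\equiv I$, on $\cok(I-A)$, the composites $\overline R\,\overline S$ and $\overline S\,\overline R$ are the identity, so $\cok(I-A)\cong\cok(I-B)$. Passing to eventual images over $\Q$, the bridge $R$ restricts to an isomorphism intertwining the automorphisms induced by $A$ and $B$, so the non-nilpotent parts of $A$ and $B$ are similar; in particular $\det(I-A)=\prod_{\lambda\neq 0}(1-\lambda)=\det(I-B)$, and $\ker(I-A)$, $\ker(I-B)$ have equal rank. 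When $A,B$ are irreducible and not permutation matrices this finishes the proof by Franks' flow-equivalence classification (isomorphic Bowen--Franks group and equal sign of $\det(I-\cdot)$); the remaining irreducible cases are single periodic orbits, whose mapping tori are circles and which are all mutually flow equivalent.

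The real difficulty is the reducible case, where there is no single Bowen--Franks-plus-determinant invariant. Here I would normalize $A$ and $B$ to their essential parts in block upper-triangular form with irreducible diagonal blocks and arrange the shift equivalence over $\Z_+$ to be compatible with this component filtration; this alignment step uses the structure of shift equivalence over $\Z_+$ and needs the usual care with transient and trivial components. The computation of the previous paragraph then applies level by level, yielding matching Bowen--Franks data, determinant signs, and compatible connecting (extension) homomorphisms for every sub- and quotient shift. One then invokes the classification of flow equivalence for reducible shifts of finite type (Huang; Boyle--Huang), equivalently the $C^{*}$-algebraic geometric classification of Cuntz--Krieger and graph algebras (Eilers--Restorff--Ruiz--S\o rensen and collaborators): a stable isomorphism $\mathcal O_A\otimes\mathbb K\cong\mathcal O_B\otimes\mathbb K$ respecting the lattice of gauge-invariant ideals and the filtered $K$-theory, upgraded by the preserved determinant-sign data, is realized by a flow equivalence. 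I expect the main obstacle to be exactly this bookkeeping: showing the \emph{positive} bridges $R,S$ (not merely integral ones) respect the reducible structure and carry all of the sign and extension data, not just the underlying filtered $K$-groups. A more self-contained alternative would be to build directly, from the shift-equivalence data, a positivity- and poset-respecting chain of elementary equivalences from $I-A$ to $I-B$ in the sense of the Parry--Sullivan / positive $K$-theory description of flow equivalence; there the obstacle is the explicit matrix construction.
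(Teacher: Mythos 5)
Your reduction of Theorem \ref{eventualtheorem} to Theorem \ref{maintheorem} is correct and matches the paper, and your treatment of the irreducible case (cokernel isomorphism via $\overline R,\overline S$, equality of $\det(I-A)$ and $\det(I-B)$, then Franks, with permutation components handled separately) is fine --- but, as the paper notes, that case was already known. The substance of the theorem is the reducible case, and there your text is a research plan rather than a proof: you explicitly defer the decisive step as ``bookkeeping,'' namely showing that the positive bridges $R,S$, once aligned with the component poset, induce an isomorphism of a \emph{complete} flow-equivalence invariant. Two concrete problems with the plan as stated. First, the invariant you propose to match (``Bowen--Franks data, determinant signs, compatible connecting homomorphisms'') is not known to be complete: Huang's invariant is the full K-web of intertwined exact sequences \emph{together with} positivity data on cycle components, and for a cycle component $\det(I-A\{p,p\})=0$, so determinant signs carry none of that positive information; verifying that a shift equivalence induces a positive K-web isomorphism is exactly the content you leave open. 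Second, the appeal to the $C^*$-algebraic geometric classification is circular in spirit: filtered K-theory classifies the stabilized Cuntz--Krieger algebras up to (equivariant) stable isomorphism, and upgrading such an isomorphism to one realized by a flow equivalence (equivalently, a diagonal-preserving one) in the non-simple case is precisely the question of Eilers that Theorem \ref{maintheorem} answers --- it cannot be quoted as an ingredient. Your ``self-contained alternative'' (a positivity- and poset-respecting chain of elementary equivalences) is likewise only named, not constructed.

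The paper closes this gap by avoiding complete invariants altogether. Proposition \ref{fromSEtoSEP} shows the given shift equivalence is automatically compatible with the poset $\p$ of irreducible components (your ``alignment step,'' done there in detail), Theorem \ref{keyproppartitioned} produces the explicit $\p$-partitioned PSE equation \eqref{pseEquationPartitioned} over $\Z[t]$, and setting $t=1$ yields a $\p$-blocked $\mathrm{SL}(\Z)$ equivalence of stabilizations of $I-A$ and $I-B$ in which the left-hand transforming matrices are nonnegative or inverses of nonnegative ones; by Proposition \ref{esscyclicpos} this equivalence is positive on cycle components, so Theorem \ref{feresult} (the matrix-equivalence form of the reducible classification from \cite{BPos2002}) applies directly. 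That explicit stabilized equivalence, manifestly carrying the positivity data, is the missing idea in your sketch; without it, or without an actual verification that shift equivalence induces a positive K-web isomorphism, the reducible case remains unproved.
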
 

Conjugacy, eventual conjugacy and flow equivalence are 
 fundamental classifications for shifts of finite type, especially
with regard to associated algebraic invariants. So,
Theorem \ref{eventualtheorem}
is a necessary part of a satisfactory understanding of
shifts of finite type.

Theorem \ref{eventualtheorem} is not vacuous, given 
the Kim-Roush constructions \cite{S21,S11} 
of shifts of finite type which
are eventually conjugate but not conjugate.
Because it is already a nontrivial matter to find subshifts which are
eventually conjugate and not conjugate, it is a
challenge to find obstructions (if they exist) to generalizations
of Theorem \ref{eventualtheorem}.  I have no
example of eventually conjugate subshifts which are not flow
equivalent. It is not difficult to construct nonexpansive systems which
are eventually conjugate but not flow
equivalent (see Section \ref{secExample}).

The validity of 
Theorem \ref{maintheorem} (well known in the irreducible case,
as we discuss later) was remarked  without proof  in my 
2000 expository article
 \cite[Sec. II.9]{BAlgAspects2000}. 
 I have forgotten the ``lost proof''
behind that remark; the proof below is certainly different.
The proof below is based on
an explicit PSE (polynomial shift equivalence)
equation \eqref{pseEquation},
derived from a given shift equivalence over $\Z$.
The PSE equation is an equivalence over $\Z[t]$ of
stabilizations of the matrices
$I-tA$ and $ I-tB$. 
Our proof strategy is straightforward.
For reducible SFTs,  we obtain  a suitable 
partitioned (block) form  PSE equation. 
Setting $t$ equal to 1, we then obtain a 
 partitioned 
$\text{SL}(\Z)$ equivalence of partitioned stabilizations of
$I-A$ and $I-B$, 
 which by existing results 
\cite{BPos2002} 
implies the SFTs $\sigma_A$, $\sigma_B$ are flow
equivalent.

There is a longstanding, mutually beneficial interaction 
 between $C^*$-algebras and symbolic dynamics. 
Flow equivalence has played a part in this,
from the seminal Cuntz-Krieger paper \cite {cuntzkrieger} 
to the landmark  classification paper \cite{errs:complete}
of Eilers, Restorff, Ruiz and S{\o}rensen. 
In this vein, I thank S{\o}ren Eilers for
informing me 
that Theorem \ref{maintheorem}
answers  a question he 
raised at the 2022 Oberwolfach $C^*$-algebras conference:
it follows from the diagram
of implications in \cite[Figure 1, p. 2112]{EilersReport2022} 
that Theorem \ref{maintheorem} 
implies any equivariant stable isomorphism
of not necessarily simple Cuntz-Krieger algebras 
can be replaced
by a diagonal-preserving one.

\thanks{I thank (in chronological order of interactions around
Theorem \ref{maintheorem}) Alfredo Costa,
Kevin Brix, Scott Schmieding, Jeremias Epperlein   
and  Tom Meyerovitch. 
Their  queries, interest and discussions over the years
led to this article. 
In particular, I thank Kevin Brix for the  observation 
mentioned in Remark \ref{brixetc}. Finally, I am grateful to the
two referees for their careful readings of the paper, which led to
consideral umprovement.}

\section{Background}

 Shifts of finite type play
an important role in dynamical systems; 
below,  we only 
establish definitions, notation and some basic facts.
See the standard reference  \cite{LindMarcus2021} for an introduction
to shifts of finite type and additional references.
For an exposition of flow equivalence of subshifts,
see \cite{bce:fei}; for everything algebraic around
the shifts of finite type, see \cite{BSCHStableAlg}. 

In this article, by  a topological dynamical system (or just system)
we mean a selfhomeomorphism of a compact metrizable space,
$T:X\to X$. 
Two such selfhomeomorphisms $S,T$ are
topologically conjugate (isomorphic as topological dynamical systems) if 
there is a homeomorphism $h$ between their domains such
that 
$hS=Th$. They are eventually conjugate if
the homeomorphisms $T^k$ and $S^k$ are conjugate, 
for all but finitely many $k$. (For $k>1$, 
$T^k$ is defined by iteration, $T^k = T\circ T^{k-1}$;  for $k<0$, 
 $T^{-k}= (T^{-1})^k$.) 
Given the homeomorphism $T:X\to X$,
let $Y_T$ denote its mapping torus,  the quotient space 
of $(X\times [0,1])$ under the map identifying
$(x,1)$ and $(Tx,0)$, for all $x$ in $X$.
There is a natural unit speed ``vertical'' flow
($\R$ action) on $Y_T$.
Two systems are flow equivalent
if there is a  homeomorphism between their
mapping tori which sends flow lines to flow lines
and is orientation preserving (in the sense that it 
respects the direction of the flow);   
equivalently, the two homeomorphisms are cross sections to a 
common flow.

From a square nonnilpotent  matrix $A$ with entries in  $\Z_+$, one
defines an edge shift of finite type (SFT), $\sigma_A : X_A \to X_A$,
as follows. Let $\mathcal G$ be a directed graph with adjacency matrix
$A$. Let $X_A$ be the set of doubly infinite sequences $x=(x_n)_{-\infty <
  n < \infty}$ such that for all $n$, $x_n$ is an edge of
$\mathcal G$ and
the terminal vertex of $x_n$
equals the initial vertex of $x_{n+1}$. Define
$\sigma_A : X_A \to X_A$ by the rule  $(\sigma_Ax)_n = x_{n+1}$.
With the natural topology,  $X_A$ is a compact metrizable zero dimensional
space, and $\sigma_A$ is a homeomorphism. 

Let $\mathcal S$ denote a semiring containing 0 and 1.
A matrix over a set $\mathcal S$ is  a matrix with every
entry in $\mathcal S$. A matrix equation over $\mathcal S$
is  an equation of matrices with all entries in $\mathcal S$.
A nonnegative matrix is a matrix with every entry a nonnegative number. 
Square matrices
$A,B$ are shift equivalent over $\mathcal S$ ($\text{SE}-\mathcal S$)
if there is a positive integer $\ell$ (the ``lag'')
and matrices $R,S$ over $\mathcal S$ such that
the following equations hold:
\[
A^{\ell} = RS\ ,\quad B^{\ell}=SR\ , \quad\quad
AR = RB\ , \quad BS = SA\ .
\]
Above, the square
matrices $A,B$ can have different sizes, and then the matrices 
$R,S$ will not be square. 
For an example, set 
 $\mathcal S=\Z_+$, $\ell =1$, 
$A=\left(\begin{smallmatrix} 2 \end{smallmatrix}\right)$, 
$B=\left(\begin{smallmatrix} 1&1\\1&1 \end{smallmatrix}\right)$,   
$R=\left(\begin{smallmatrix} 1&1 \end{smallmatrix}\right)$,   
$S=\left(\begin{smallmatrix} 1\\1 \end{smallmatrix}\right)$.  

$\text{SE}-\mathcal S$ is an equivalence relation on square matrices
over $\mathcal S$. An elementary strong shift equivalence over
$\mathcal S$ ($\text{ESSE}-\mathcal S$) is a
lag 1 shift equivalence.  
Strong shift equivalence over $\mathcal S$
($\text{SSE}-\mathcal S$) is the transitive closure of 
$\text{ESSE}-\mathcal S$. For square nonnilpotent matrices
$A,B$ over $\Z_+$,
$\sigma_A$ and $\sigma_B$ are conjugate iff $A$ and $B$ are
$\text{SSE}-\Z_+$;  
$\sigma_A$ and $\sigma_B$ are eventually conjugate iff $A$ and $B$ are
$\text{SE}-\Z_+$.  

Let $\mathcal R$ be a ring. 
The  general linear group 
$\text{GL}(n,\mathcal R)$ is the group of $n\times n$ matrices $U$
over $\mathcal R$ for which there exists a matrix $V$ over $\mathcal R$
such that $UV=VU=I$. 
The elementary group
$\text{El}(n,\mathcal R)$ is the subgroup of
$\text{GL}(n,\mathcal R)$ generated by 
matrices equal to the identity
except possibly in a single off-diagonal entry.  
If $\mathcal R$ is commutative,
then $\text{SL}(n,\mathcal R)$ is the subgroup of
$\text{GL}(n,\mathcal R)$ consisting
of the matrices with determinant 1.
It is well known that $\text{SL}(n, \Z) =
\text{El}(n, \Z)$.

The infinite general linear group $\text{GL}(\mathcal R)$ is
the direct limit group 
\[
\text{GL}(n,\mathcal R) \to \text{GL}(n+1,\mathcal R)
\to \text{GL}(n+2,\mathcal R) \to \cdots
\]
with the bonding maps given by
the rule 
$C\mapsto \left( \begin{smallmatrix} C&0\\0&1 \end{smallmatrix}\right)$.
The infinite groups $\text{El}(\mathcal R)$ and
$\text{SL}(\mathcal R)$ are defined as direct limits
in
the same way. For any square matrix $C$
and identity matrix $I$, we refer to a matrix
$ \left( \begin{smallmatrix} C&0\\0&I \end{smallmatrix}\right)$ as a
stabilization of $C$. 

Matrices $C,D$ are $\text{SL}(n,\mathcal R)$ equivalent if there are
matrices $U,V$ in $\text{SL}(n,\mathcal R)$ such
that $UCV=D$. We may sometimes commit abuse of notation
of the following sort,
writing 
$\text{SL}(\mathcal R)$ in place of
$\text{SL}(n,\mathcal R)$.

\begin{prop} \label{diagblocksirred} 
  Every nonnilpotent square matrix over $\Z_+$ is SSE-$\Z_+$ to
  a block upper triangular  matrix with every diagonal block
  irreducible.
\end{prop}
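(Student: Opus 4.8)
The plan is to argue by induction on the matrix size $n$, using the basic structure theory of nonnegative integer matrices together with the fact that state-splitting/amalgamation moves are realized by $\SSE$-$\Z_+$. First, recall that a square matrix $A$ over $\Z_+$ is the adjacency matrix of a finite directed graph $\mathcal G$; the strongly connected components of $\mathcal G$, ordered by accessibility, induce a partition of the index set that puts $A$ (after a permutation of indices, which is itself an $\SSE$-$\Z_+$ via permutation matrices $P$, $P^{-1}=P^{\mathsf T}$) into block upper triangular form
\[
A \sim \begin{pmatrix} A_{11} & A_{12} & \cdots & A_{1k} \\ 0 & A_{22} & \cdots & A_{2k} \\ \vdots & & \ddots & \vdots \\ 0 & 0 & \cdots & A_{kk} \end{pmatrix},
\]
where each diagonal block $A_{ii}$ is either irreducible or is the $1\times 1$ zero matrix (a component consisting of a single vertex with no self-loop). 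So the only obstruction to the desired conclusion is the presence of $1\times 1$ zero diagonal blocks; if there are none, we are done. Since $A$ is nonnilpotent, at least one diagonal block is irreducible and nonzero.

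The main work is therefore to eliminate the $1\times 1$ zero blocks on the diagonal by an $\SSE$-$\Z_+$ move, without disturbing nonnilpotency or reintroducing new zero blocks. The idea is that a vertex $v$ with no self-loop is "transient" and can be removed by a standard graph reduction: if $v$ has no incoming edges, delete it; if $v$ has no outgoing edges, delete it; and otherwise, splice together each in-edge with each out-edge at $v$ and then delete $v$ (the "state elimination" or "out-amalgamation/in-amalgamation at a transient state" move). Each such reduction is realized by an elementary strong shift equivalence over $\Z_+$: writing $A = \left(\begin{smallmatrix} 0 & r \\ c & A'\end{smallmatrix}\right)$ in a block form isolating the transient vertex (so the scalar $(1,1)$-entry is $0$, $r$ is a row, $c$ is a column, and $A'$ is the matrix on the remaining vertices), one checks the lag-$1$ shift equivalence
\[
A = \begin{pmatrix} 0 & r \\ c & A' \end{pmatrix} = \begin{pmatrix} 0 \\ I \end{pmatrix}\begin{pmatrix} r & 0 \\ c & A' \end{pmatrix}, \qquad \begin{pmatrix} r & 0 \\ c & A' \end{pmatrix}\begin{pmatrix} 0 \\ I \end{pmatrix} \ \text{pairs with}\ A'' := \begin{pmatrix} A' \end{pmatrix} + (\text{splice term }),
\]
so that $A$ is $\ESSE$-$\Z_+$ to a strictly smaller matrix $A''$ over $\Z_+$ on the nontransient vertices. (The precise bookkeeping is routine: it is the matrix identity underlying amalgamation of a transient state, and all entries stay in $\Z_+$ because $r,c,A'$ do.) Since $A$ is nonnilpotent it is not the case that every vertex is transient, so this reduction strictly decreases $n$ and terminates at a matrix with no zero diagonal blocks; that matrix is still nonnilpotent (nonnilpotency is an $\SSE$-$\Z_+$ invariant, as the SFT is nonempty), and it is $\SSE$-$\Z_+$ to the original $A$.

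Finally, I would assemble the two reductions: given any nonnilpotent $A$ over $\Z_+$, first apply the transient-vertex eliminations to reach an $\SSE$-$\Z_+$ matrix in which no strongly connected component is a single loop-free vertex, and then permute indices into block upper triangular form along the component ordering. Every diagonal block is now a nonzero irreducible matrix, as required. The step I expect to be the main obstacle is verifying that the transient-vertex amalgamation is genuinely an $\ESSE$ \emph{over} $\Z_+$ (rather than merely over $\Z$) and that it does not create a fresh loop-free singleton component — one must be slightly careful about the case where a transient vertex's removal leaves another vertex newly without in- or out-edges, but this only accelerates termination and is handled by iterating the move. (Alternatively, one can cite the corresponding statement in \cite{LindMarcus2021} on the existence of an essential graph / irreducible-component decomposition within a conjugacy class, but the inductive argument above is self-contained.)
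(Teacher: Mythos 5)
Your overall strategy is the same as the paper's: pass to the strongly-connected-component (Frobenius) block upper triangular form, in which each diagonal block is irreducible or zero, and then remove the zero diagonal blocks one at a time by an elementary strong shift equivalence over $\Z_+$. The gap is precisely in the step that carries all the content of the proposition: the elementary equivalence itself. The factorization you display is not a valid matrix equation --- with $r$ a $1\times m$ row, $c$ an $m\times 1$ column and $A'$ of size $m\times m$, the product $\left(\begin{smallmatrix}0\\ I\end{smallmatrix}\right)\left(\begin{smallmatrix} r&0\\ c&A'\end{smallmatrix}\right)$ is not even defined, and the second product is never computed, only asserted to be ``$A'$ plus a splice term.'' Moreover, the matrix you claim to reach, the naive splice $A''=A'+cr$ with every other entry of $A'$ left unchanged, is not what the natural lag-$1$ factorizations over $\Z_+$ produce: they necessarily modify the blocks that cross over the eliminated vertex. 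For instance, the identity the paper uses,
\begin{align*}
\begin{pmatrix} A & B & C \\ 0 & 0 & D \\ 0&0&E \end{pmatrix}
    &= \begin{pmatrix} I & C \\  0 & D \\ 0&E \end{pmatrix}
    \begin{pmatrix} A & B & 0 \\  0 &0 & I  \end{pmatrix}\ ,
    \qquad
\begin{pmatrix} A & B & 0 \\  0 &0 & I  \end{pmatrix}
\begin{pmatrix} I & C \\  0 & D \\ 0&E \end{pmatrix}
= \begin{pmatrix} A  & AC+ BD \\ 0 & E  \end{pmatrix}\ ,
\end{align*}
eliminates the zero diagonal block but replaces the crossing block $C$ by $AC+BD$, not by $C+BD$. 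So the ``routine bookkeeping'' you defer is exactly the missing proof: either exhibit a correct $\Z_+$ factorization realizing your naive splice (it is not clear one exists as a single ESSE, and trying the obvious candidates yields $A'c$ or $AC$-type distortions of the connecting data), or abandon the splice description and use an identity like the one above, which is all the proposition needs.

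The remaining ingredients of your plan are fine and match the paper's: permutation conjugation is an ESSE-$\Z_+$, deleting a vertex with no incoming or no outgoing edges does not change the edge shift, and nonnilpotency guarantees at least one irreducible block survives. Your worry about the elimination creating a fresh loop-free singleton component does not arise once you use a correct identity such as the displayed one, because the diagonal blocks $A$ and $E$ (which contain all the other irreducible and zero diagonal blocks) are untouched; only the rectangular block between them changes, so the move can be iterated until no zero diagonal blocks remain.
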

\begin{proof} The proof is an exercise
  (see e.g. \cite{BoyleJordanForm1984}).
  From a block upper triangular form with irreducible and zero diagonal blocks,
 iteratively one uses an ESSE to eliminate zero diagonal blocks. E.g., 
  \begin{align*}
    \begin{pmatrix} A & B & C \\ 0 & 0 & D \\ 0&0&E \end{pmatrix}
    &=     \begin{pmatrix} I & C \\  0 & D \\ 0&E \end{pmatrix}
    \begin{pmatrix} A & B & 0 \\  0 &0 & I  \end{pmatrix}\ , \\
        \begin{pmatrix} A  & AC+ BD \\ 0 & E  \end{pmatrix}
        &=    \begin{pmatrix} A & B & 0 \\  0 &0 & I  \end{pmatrix}
 \begin{pmatrix} I & C \\  0 & D \\ 0&E \end{pmatrix}
        \ .
        \end{align*} 
      \end{proof}

\begin{notation}
The rows and columns of an $m\times n$ matrix are indexed by totally ordered
sets (typically, $\{ 1, \dots , m\}$ and 
$\{ 1, \dots , n \}$).
For arguments in this paper, it will be convenient later 
to use a slightly more general definition of \lq\lq matrix\rq\rq\ 
(allowing  finite index sets without orderings).
We state this definition at 
 the beginning of Section \ref{partitionedSection},
and use it from that point on in the paper. 
We use  the
       Kronecker delta notation:  
     $\delta_{ij}= 0 $ if $i\neq j$,  and $\delta_{ij} =1 $ if
       $ i= j$.
We  use          
 $\sqcup$ to represent the disjoint union of sets. 
\end{notation}

\section{The PSE  equation}

From a given shift equivalence over a ring $\mathcal R$,  
we derive an explicit equivalence of  matrices over
the polynomial ring $\mathcal R[t]$.

\begin{thm} \label{keyprop}
Suppose $\mathcal R$ is a ring and matrices $R,S$ give a shift equivalence
over $\mathcal R$ from $A$ to $B$ with lag $\ell$:
\[
A^{\ell} =RS,\quad B^{\ell}=SR, \quad AR = RB, \quad SA=BS \ .
\]
Then 
\[
  \begin{pmatrix} I-tA & -R \\ 0 & I
  \end{pmatrix}
  \begin{pmatrix} I + (tA) + \cdots +(tA)^{\ell -1} & R \\
    -t^{\ell}S  & I-tB
  \end{pmatrix}
\     = \ 
    \begin{pmatrix} I& 0 \\ -t^{\ell}S  & I -tB 
    \end{pmatrix} \ .
    \]
Let $U$ be the intertwining matrix, 
    $U =   \begin{pmatrix} I + (tA) + \cdots +(tA)^{\ell -1} & R \\
  -t^{\ell}S & I-tB  \end{pmatrix}      $; 
then the following PSE (polynomial shift equivalence) equation holds: 
\begin{equation} \label{pseEquation}
  \begin{pmatrix} I & 0 \\ t^{\ell}S & I
  \end{pmatrix}
  \begin{pmatrix} I & -R \\ 0 & I
  \end{pmatrix}
    \begin{pmatrix} I-tA & 0 \\ 0 & I
    \end{pmatrix}
    U
    \     = \ 
    \begin{pmatrix} I& 0 \\ 0  & I -tB 
    \end{pmatrix}
    \end{equation} 
If $\ell =1$, then $U$ 
lies in $El(\mathcal R[t])$.
In any case, $U$ lies in $GL(\mathcal R[t])$. 
If $\mathcal R$ is an integral domain, 
then $U\in SL(\mathcal R[t])$.

In particular, if $\mathcal R = \Z$ then 
$U\in SL(\Z [t])$, and the equation \eqref{pseEquation} 
gives a stabilized $SL(\Z[t])$-equivalence of
$(I-tA)$ and $(I-tB)$.

\end{thm}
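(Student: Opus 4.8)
The plan is to run through a chain of block-matrix identities in the order the theorem lists its claims, isolating the single step that needs an idea rather than bookkeeping.

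First I would verify the displayed identity $\left(\begin{smallmatrix} I-tA & -R \\ 0 & I\end{smallmatrix}\right)U = \left(\begin{smallmatrix} I & 0 \\ -t^{\ell}S & I-tB\end{smallmatrix}\right)$ by multiplying out the two $2\times2$ block matrices. Writing $G_A := I + tA + \cdots + (tA)^{\ell-1}$, the only entry needing comment is the $(1,1)$ one, where the telescoping identity $(I-tA)G_A = I - (tA)^{\ell} = I - t^{\ell}A^{\ell} = I - t^{\ell}RS$ cancels against the contribution $(-R)(-t^{\ell}S) = t^{\ell}RS$ to leave $I$; the $(1,2)$ entry is $(I-tA)R - R(I-tB) = t(RB-AR) = 0$ by $AR = RB$, and the bottom row is immediate. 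To obtain the PSE equation \eqref{pseEquation} I would factor the left-hand matrix as $\left(\begin{smallmatrix}I&-R\\0&I\end{smallmatrix}\right)\left(\begin{smallmatrix}I-tA&0\\0&I\end{smallmatrix}\right)$ and the right-hand matrix as $\left(\begin{smallmatrix}I&0\\-t^{\ell}S&I\end{smallmatrix}\right)\left(\begin{smallmatrix}I&0\\0&I-tB\end{smallmatrix}\right)$, then left-multiply by $\left(\begin{smallmatrix}I&0\\t^{\ell}S&I\end{smallmatrix}\right)$; the factor $\left(\begin{smallmatrix}I&0\\t^{\ell}S&I\end{smallmatrix}\right)\left(\begin{smallmatrix}I&0\\-t^{\ell}S&I\end{smallmatrix}\right)$ collapses to $I$ and what remains is precisely \eqref{pseEquation}.

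For the membership statements I would argue case by case. If $\ell = 1$, then $G_A = I$ and $B = SR$, so $U = \left(\begin{smallmatrix}I&R\\-tS&I-tSR\end{smallmatrix}\right) = \left(\begin{smallmatrix}I&0\\-tS&I\end{smallmatrix}\right)\left(\begin{smallmatrix}I&R\\0&I\end{smallmatrix}\right)$, and each factor is a finite product of pairwise commuting transvections (one for each entry of the off-diagonal block), whence $U \in \text{El}(\mathcal{R}[t])$. For general $\ell$ the essential move is to write down the inverse by hand: I would take $V := \left(\begin{smallmatrix}I-tA & -R \\ t^{\ell}S & G_B\end{smallmatrix}\right)$ with $G_B := I + tB + \cdots + (tB)^{\ell-1}$, and check $UV = VU = I$ by block multiplication, using $(I-tA)G_A = I - t^{\ell}RS$, its $B$-analogue $(I-tB)G_B = I - t^{\ell}SR$, and the four intertwinings in the forms $G_A R = RG_B$, $SG_A = G_B S$, $(I-tA)R = R(I-tB)$, $S(I-tA) = (I-tB)S$. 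Since $V$ has entries in $\mathcal{R}[t]$ this gives $U \in \text{GL}(\mathcal{R}[t])$.

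Finally, when $\mathcal{R}$ is an integral domain, $\mathcal{R}[t]$ is a domain whose units are just the units of $\mathcal{R}$, so $UV = I$ forces $\det U$ to be a constant polynomial in $\mathcal{R}^{\times}$; evaluating at $t = 0$, where $U$ specializes to $\left(\begin{smallmatrix}I&R\\0&I\end{smallmatrix}\right)$ of determinant $1$, pins $\det U = 1$, i.e.\ $U \in \text{SL}(\mathcal{R}[t])$. Taking $\mathcal{R} = \Z$ is then a special case, and \eqref{pseEquation} exhibits the stabilizations $\left(\begin{smallmatrix}I-tA&0\\0&I\end{smallmatrix}\right)$ and $\left(\begin{smallmatrix}I&0\\0&I-tB\end{smallmatrix}\right)$ of $I-tA$ and $I-tB$ as $\text{SL}(\Z[t])$-equivalent, via the elementary left-hand factor and the right-hand factor $U$. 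I expect the only genuine obstacle is spotting the closed form for $U^{-1}$: over $\mathcal{R}[[t]]$ invertibility of $U$ is automatic because $I-tA$ is invertible there, but over the polynomial ring one must exploit that $I - t^{\ell}A^{\ell}$ factors \emph{polynomially} as $(I-tA)G_A$, which is exactly what keeps the entries of $V$ in $\mathcal{R}[t]$.
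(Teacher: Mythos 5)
Your proposal is correct and follows essentially the same route as the paper: the same block computations for the displayed identity and \eqref{pseEquation}, the same factorization of $U$ into two unipotent block-triangular matrices when $\ell=1$, the identical explicit inverse $V=\left(\begin{smallmatrix}I-tA & -R\\ t^{\ell}S & I+tB+\cdots+(tB)^{\ell-1}\end{smallmatrix}\right)$ for general $\ell$, and the same units-of-$\mathcal R[t]$ plus evaluation-at-$t=0$ argument for $\det U=1$. The only (harmless) difference is that you read off $\det U|_{t=0}=1$ by specializing $U$ directly rather than via \eqref{pseEquation}.
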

\begin{proof}
  The validity of the equation is verified by a  computation.
  It remains to verify the claims for $U$.
  If $\ell =1$, then $U = \begin{pmatrix} I & R \\ -tS & I-tB\end{pmatrix}$
      and
    \[
    \begin{pmatrix} I & R \\ -tS & I-tB\end{pmatrix}
    \begin{pmatrix} I & -R \\ 0 & I\end{pmatrix}
      =
      \begin{pmatrix} I & 0 \\ -tS & I\end{pmatrix} \ .
      \]

      Now suppose $\ell > 1$ and define the matrix
      \[
      V = \begin{pmatrix}
        I-tA & -R \\
        t^{\ell}S & I +(tB) + \cdots +(tB)^{\ell -1}
      \end{pmatrix} \ . 
      \]
      A computation shows $UV=VU=I$, so $U\in GL(\mathcal R)$.

      If $\mathcal R$ is commutative, then $\det (U)$ lies in the
      units group of $\mathcal R[t]$. If $\mathcal R$ is an integral
      domain, then the units group of $\mathcal R[t]$ equals 
      the units group of $\mathcal R$; in this case,
       $\det U$ equals the evaluation
      of $\det U$ at $t=0$, which by \eqref{pseEquation}
      must be 1. 
\end{proof}

\begin{rem}
  For definiteness, in this paper
  we simply refer to \eqref{pseEquation} as the 
  PSE equation. 
However, if $0\leq i \leq \ell$, then \eqref{pseEquation} 
 remains
true if $t^lS$ and $R$ are replaced by
$t^{\ell -i}S$ and $t^iR$. 
One might consider such a modification,
or any obvious   rearrangement of \eqref{pseEquation},  
 as \lq\lq the\rq\rq\  PSE equation, or \lq\lq a\rq\rq\ PSE equation. 
We use \lq\lq PSE equation\rq\rq\   by analogy to the
PSSE (polynomial strong shift equivalence) equations
derived in \cite{BW04} from a strong shift equivalence. 
\end{rem}

\begin{rem}
  As originally
  pointed out by Wagoner,   shift equivalence over a ring $\mathcal R$
   of matrices $A,B$ is equivalent to 
  isomorphism of the cokernel $\mathcal R[t]$-modules of $(I-tA)$
  and $(I-tB)$ (see \cite[Sec. 4.3]{BSCHStableAlg}
 for a thorough discussion 
  of this). From the
  cokernel isomorphism,
  and the injectivity of $(I-tA)$ and $ (I-tB)$, 
  Fitting's argument in \cite{Fitting1936} (as presented by Warfield
  in \cite{Warfield1978}) gives an equivalence
  of the stabilized versions of $(I-tA)$   and $(I-tB)$.
  The explicit equivalence \eqref{pseEquation} comes from
  plugging into the Fitting argument explicit cokernel isomorphisms derived
  from the shift equivalence equations.
  It is already known that shift equivalence of
  $A,B$ gives  equivalences of stabilizations
  of $(I-tA)$ and $(I-tB)$ (see 
\cite{BoSc1} or \cite{BSCHStableAlg}). 
The  explicit 
  equivalence \eqref{pseEquation}  is computed 
  for use in proving Theorem \ref{maintheorem}. 
  \end{rem}

To prove Theorem \ref{maintheorem}, we  will
need  a  version of
Theorem \ref{keyprop} for partitioned matrices.
We develop the relevant material on partitioned matrices in
the next section.

\section{Partitioned matrices} \label{partitionedSection}

\begin{standingnotation} \label{standingnotation}
For the rest of the paper,  a  \lq\lq matrix\rq\rq\  $M$ 
will come  with 
finite nonempty sets 
 $\mathcal I^M$ and $\mathcal J^M$ 
indexing its rows and columns. 
In a slight abuse of
notation  
we will not require 
these index sets  
to be  
the standard integer intervals,
or even be ordered sets.\begin{footnote}{This
abuse of notation lets us avoid superfluous
complicating bijections from convenient index sets to sets of 
the 
form $\{ 1, ... , m\}$. The choice of such bijections -- to produce
standard matrices -- isn't relevant for us, because conjugation
by a permutation matrix respects flow equivalence and shift
equivalence.}\end{footnote}
A \lq\lq standard matrix\rq\rq\ is a matrix
is (for some $m,n$) an $m\times n$ matrix $M$ 
with the usual totally ordered index sets,
$\mathcal I_M=\{1, \dots, m\}$ and
$\mathcal J_M=\{1, \dots, n\}$.
\end{standingnotation} 

 We state now  some 
(more or less obvious) definitions
we use to  accommodate our slightly expanded
definition of matrix. We use $\R_+$ to denote the semiring
of nonnegative real numbers. 
A matrix $M$ is {\it square} if $\mathcal I_M= \mathcal J_M$. 
An {\it irreducible} matrix is a square matrix $A$ over $\R_+$
such that for every index pair $(i,j)$ there exists $k>0$ such that
$A^k(i,j)>0$.
A {\it principal} submatrix of a square matrix $A$ is the restriction of $A$
to $\mathcal I \times \mathcal I$, for some nonempty subset of
$\mathcal I^A$. 
 An {\it irreducible component} of 
a square matrix  is 
 a principal submatrix which is irreducible and is not properly
 contained in another irreducible principal submatrix.
 An {\it essentially irreducible} matrix
 is a square matrix  with a unique irreducible
 component. An {\it essentially cyclic} matrix is an essentially
 irreducible matrix whose irreducible component is a cyclic permutation
 matrix.

Throughout, $\p$ denotes a given finite poset; $\preceq$ denotes its
transitive reflexive antisymmetric relation; and $\prec $ denotes
the transitive
relation $\preceq $ and $\neq $. 
A {\it $\p$-partitioned matrix} is a  matrix $A$  together
with partitions
$
\{\mathcal I^A_p: p \in \p\}$ of $\mathcal I^A$ and
$
\{\mathcal J^A_p : p\in \p\}$  of $\mathcal J^A$ 
 into nonempty
sets
such
for $r,s$ in $\p$, 
\[
i \in  I_r^A ,\  j\in J_s^A \text{ and } A(i,j) \neq 0
\implies r\preceq s\ .
\]
A $\p$-partitioned matrix $A$ has a $\p $-block structure,
where the $(p,q)$ block $A\{p,q\}$ is the restriction of $A$ to
the index set
$\mathcal I^A_p \times \mathcal J^A_q$
  \begin{footnote}
        {Fix  an ordering $ p_1, \dots , p_N $ of the elements of
      $\p$ such that $p_i \prec p_j \implies i<j $. Choose  bijections 
      $\mathcal I^A \to \{1, \dots , m\}$ and
      $\mathcal J^A \to \{1, \dots , n\}$ 
      such that $\mathcal I^A_1 , \dots \mathcal I^A_N$
and  $\mathcal J^A_1 , \dots \mathcal J^A_N$ 
      become successive
integer intervals.  
 With this ordering
      of indices, $A$ presents as a standard $m\times n$ 
      block upper triangular matrix, with rectangular blocks
    $A\{p,q\}$; $A\{p,q\}$ is a zero block if $p\not\preceq q$.}  
  \end{footnote}.  When $p=q$, we may refer to $A\{p,q\}$ 
  as a diagonal block.

  Let $\MP (S)$ denote the set of 
  $\p$-partitioned matrices with entries in $S$. 
We  always assume $S$ is contained in a semiring,  with additive and
multiplicative identity elements $0$ and $1$. 
For $A,B$ in $\MP (S)$,
we declare $A+B$ to be  defined in
$\MP (S)$ iff $\mathcal I^A = \mathcal I^B$ and
$\mathcal J^A = \mathcal J^B$; then (of course)
$\mathcal I^{A+B} = \mathcal I^A$ and
$\mathcal J^{A+B} = \mathcal J^A$. 
We declare the product $AB$ to   be
defined in $\MP (S)$  iff
$\mathcal J_A = \mathcal I_B$, in which case
we define $\mathcal I^{AB} =  \mathcal I^A$ and 
$\mathcal J^{AB}= \mathcal J^B$.
Here, to check $AB$ is $\p$-partitioned, note 
\begin{align*}
  (AB)\{p,q\}  = \sum_{r}  A\{p,r\} B\{r,q\}  
 = \sum_{r: \, p \preceq r \preceq q} A\{p,r\} B\{r,q\} \ .
\end{align*}
Thus $(AB)\{p,q\}\neq 0$ implies  there exists $r$ such that
$p \preceq r \preceq q$, and therefore $p\preceq q$.

A matrix $A$ in $\MP(\mathcal S)$ is a square $\mathcal P$-partitioned
matrix if $\mathcal I^A_p=\mathcal J^A_p$ for each $p$ in $\mathcal P$. 
We define $\MPE(\mathcal S)$ to be the set of 
square $\mathcal P$-partitioned matrices over $\mathcal S$;
so, $A^2$ is defined in $\MP(\mathcal S)$ iff $A\in \MPE(\mathcal S)$.
The set of $n\times n$ matrices in $\MP(\mathcal S)$ is
denoted $\MP(n,\mathcal S)$. 

Associated to a square  matrix $A$ over $\Z_+$ is its poset $\mathcal P^A$
of irreducible components, in which 
$p\preceq q$ if and only if
for some $k$ there is an index $i$ for component $p$ and
an index $j$ for
component $q$ such that $A^k(i,j)>0$. This is the poset which
is meaningful for the shift of finite type $\sigma_A$. 
For $A$ in $\MPE(\mathcal \Z_+)$,  when  each $A\{ p,p\}$ is essentially
irreducible, there is a bijection $\mathcal P\to \mathcal P^A$
sending $p$ to the irreducible component of $A\{p,p\}$;
 with this correspondence, we simplify notation by letting  
  $\p$ be the set of  irreducible components. 
However,  in general 
the poset relations of $\p$ and $\p^A$ can be different,
because   $A\{ p, q\} \neq 0$ need not  
imply $p \preceq q$ in $\p^A$. For example, consider the standard
square matrix 
$A =\left(\begin{smallmatrix} 1&1&0\\ 0&0&0 \\ 0&0&1\end{smallmatrix}\right)$,
as an element of $\MPE (\Z_+)$ with 
 $\p$-partition sets $\mathcal I_p=\{ 1\}, \mathcal I_q=\{ 2,3\}$. 
Then $A\{p,q\} \neq 0$, so $p\prec q$ in $\p$, but $p\nprec q$ in
$\p^A$. 

So, we consider  a class in $\MPE$ more tightly related to the irreducible
components poset. We define 
$\MPdelta (\Z_+)$ 
to be the set of
$A$ in   $\MPE(\Z_+)$ such that 
for all $p,q$ in $\mathcal P$ 
the following hold.   
\begin{enumerate} 
\item 
  $A\{ p,p\}$ is an  irreducible component of $A$.
 \item 
 $p\preceq q$ in $\p$ if and only if 
   there exist $k>0$
  such that $A^k\{p,q\} \neq 0$ .
\end{enumerate}
Then, on account of condition (1), for both $\p$ and $\p^A$ 
the poset relation $\prec$  is generated by
the relation $  \prec \prec  $,
where $ p \prec \prec  q$ when $A\{p,q\} \neq 0$.  
Thus for  $A \in \MPdelta (\Z_+)$,
the poset $\p$ can be identified with $\p^A$.

Now let $\MEdelta(\Z_+)$ be the set of square matrices over $\Z_+$
for which every index hits an irreducible component. 
Given   $A$ in $\MEdelta(\Z_+)$, 
 for  $\p=\p^A$  define the natural $\p$-partitioning of indices: 
 $\mathcal I_{\p} = \mathcal J_{\p}$
is the set of indices through
irreducible compoent $\p$.
This is the   
unique\begin{footnote}
{Any square matrix $A$ over $\Z_+$ has a 
  partition of indices indexed by $\mathcal P^A$ 
  for which each $A\{p,p\}$ is essentially
  irreducible. However,  this partition is not unique if
  $A\notin \MEdelta(\Z_+)$.}
  \end{footnote}
$\p$-partitioning of indices with  which  $A$
is an element of  $\MPdelta$. 

For examples, we list three standard square matrices,
with rows and columns indexed by integer intervals $\{1, \dots , m\}$:
\[
A=\begin{pmatrix} 5&1&0&0 \\ 0&1&1&0 \\ 0&0&0&3 \\ 0&0&0&7
\end{pmatrix} \, \quad \quad 
B=\begin{pmatrix} 5&1&0 \\ 0&1&3 \\0&0&7
\end{pmatrix}
 \, \quad
 C=\begin{pmatrix} 3&0&1&0  \\ 0&1&1&0 \\ 0&0&0&1 \\
 0&0&0&5
 \end{pmatrix}
\ .
\]
Each matrix has three irreducible components $p_1,p_2,p_3$ (indexed
in the order of the corresponding diagonal blocks).
  Of the three matrices,
only $B$ is in 
  $  \MEdelta(\Z_+)$. The unique partitioning
  of indices with which $B$ becomes
  an element of $\MPdelta(\Z_+)$ is
  $\{1\}, \{2\}, \{3\}$.
  For
$A$ and $B$, $p_i \prec p_j$ iff $i<j$.
  The matrices $A,B$ define topologically
  conjugate edge SFTs, because
\[
  A=\left(\begin{smallmatrix} 1&0&0 \\ 0&1&0 \\ 0&0&3 \\ 0&0&7
  \end{smallmatrix}\right)
  \left(\begin{smallmatrix} 5&1&0&0 \\ 0&1&1&0 \\ 0&0&0&1 
  \end{smallmatrix}\right)
  \quad  \textnormal{ and } \quad 
    B=\left(\begin{smallmatrix} 5&1&0&0 \\ 0&1&1&0 \\ 0&0&0&1 
  \end{smallmatrix}\right)
  \left(\begin{smallmatrix} 1&0&0 \\ 0&1&0 \\ 0&0&3 \\ 0&0&7
  \end{smallmatrix}\right) \ . 
\]
 For the matrix $C$,
 $p_i \prec p_j$ iff $j=3$ and $i\in \{1,2\}$.
  With  the partitioning of indices by 
  $\{1\}, \{2,3\}, \{4\}$, 
  the submatrix $C\{p_1, p_2\}$ is nonzero, even though
  $p_1 \not\preceq p_2$ in $\p^C$; with respect to the partitioning
  by $\{1\}, \{2\}, \{3,4\}$, though,  
$C\{p_i,p_j\} \neq 0$ does imply 
  $p_i \preceq p_j$ in $\p^C$, and the posets $\p$, $\p^C$ are the same.

With the exception of $\text{SL}_{\p}(n, \Z)$ equivalence, which
has an additional condition on diagonal blocks 
(see Definition \ref{defSLPEquiv}), in this paper 
a matrix relation with a modifer $\p$
is the same  relation on the class of $\p$-partitioned matrices
(i.e., achieved by operations compatible with the 
$\p$-partition structure). 
For example, $ A $ and $ B$ are
$\SEP-\Z_+$ if there are $R,S$ in $\MP(\Z_+)$
and a positive integer $\ell$ 
such that the following equations of $\p$-partitioned matrices hold:
\begin{equation} \label{seequation}
 A^{\ell}  =  RS\ , \quad B^{\ell} = SR \ , \quad \quad
AR = RB \ , \quad  BS = SA\ . 
\end{equation}

\begin{prop} \label{fromSEtoSEP}
  Suppose   $A,B$  are standard matrices in
$\MEdelta(\Z_+)$, and 
  there is a shift equivalence
  over $\Z_+$,
\begin{equation}\label{SEequation}
 A^{\ell}  =  RS\ , \quad B^{\ell} = SR \ , \quad \quad
AR = RB \ , \quad  BS = SA\ . 
\end{equation} 

Then the following hold.
\begin{enumerate}
  \item 
    The posets $\mathcal P^A$ and $\mathcal P^B$ are isomorphic.
    \item 
With $\mathcal P=\mathcal P^A$, there is a 
 unique choice of 
 $\p$-partitions for 
$B,R,S$ such that the equation \eqref{SEequation} defines
an $SE_{\p }-\Z_+$ with $A,B$ in $\MPdelta$.
\end{enumerate} 
\end{prop}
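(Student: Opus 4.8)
The plan is to use the shift equivalence equations to transport the combinatorial structure of irreducible components from $A$ to $B$, and then verify that the induced block structure is forced. First I would recall that for a shift equivalence $A^\ell = RS$, $B^\ell = SR$, $AR = RB$, $SA = BS$ over $\Z_+$, the matrices $R$ and $S$ induce maps between the eventual ranges of $A$ and $B$, and that $A$ and $B$ are both "eventually" of full rank on these ranges since they lie in $\MEdelta(\Z_+)$ (no vanishing principal submatrix). The key standard fact I would invoke is that shift equivalence over $\Z_+$ implies the SFTs $\sigma_A, \sigma_B$ are eventually conjugate, and hence the induced graphs have isomorphic posets of irreducible components together with the same "cycle structure" (periods, and which components reach which). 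Concretely, $\mathcal P^A \cong \mathcal P^B$ as posets: I would build the isomorphism directly from $R$ and $S$ by showing that for each irreducible component $p$ of $A$ there is exactly one irreducible component $\phi(p)$ of $B$ such that the corresponding block of $R$ is nonzero "eventually," using $A^\ell = RS$ and $B^\ell = SR$ to see that $\phi$ and its inverse (built from $S$) compose to the identity on components. The order-preservation follows because $AR = RB$ forces the support of $R$, read blockwise, to be compatible with both partial orders.

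Once (1) is in hand, for (2) I would set $\mathcal P = \mathcal P^A$ and define the $\mathcal P$-partition of the index set $\mathcal I^B = \mathcal J^B$ by declaring $\mathcal I^B_p = \mathcal J^B_p$ to be the set of indices of $B$ lying in irreducible component $\phi^{-1}(p)$ — this is exactly the unique partition making $B \in \MPdelta$ that the excerpt already guarantees exists for any matrix in $\MEdelta(\Z_+)$. The content is then to choose $\mathcal P$-partitions of the rectangular matrices $R$ (rows indexed like $A$, columns like $B$) and $S$ (rows like $B$, columns like $A$) so that the four equations \eqref{SEequation} become equations of $\mathcal P$-partitioned matrices. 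For $R$: its row index set is $\mathcal I^A$, already $\mathcal P$-partitioned via $A \in \MPdelta$; its column index set is $\mathcal I^B$, $\mathcal P$-partitioned as just defined. I must check $R\{p,q\} \neq 0 \implies p \preceq q$. This is where I would use $AR = RB$ together with irreducibility of the diagonal blocks: iterating, $A^k R = R B^k$ for all $k$, so if $R\{p,q\} \neq 0$ then composing with high powers of the (irreducible, hence eventually everywhere-positive within a component) diagonal blocks $A\{p,p\}$ and $B\{q,q\}$ propagates nonzero entries, and combined with $A^\ell = RS$, $B^\ell = SR$ one gets $A^{k}\{p,\phi^{-1}(p) \text{'s image}\} \neq 0$ in a way that pins down $p \preceq q$. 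The symmetric argument handles $S\{q,p\} \neq 0 \implies p \preceq q$, i.e. $q \succeq p$ — wait, I should be careful about direction: $S$ intertwines $B$ into $A$, so its support constraint reads $S\{q,p\} \neq 0 \implies q \preceq p$ under the identification, consistent with $R$ and $S$ being "inverse up to lag."

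For uniqueness in (2): the partition of $\mathcal I^B$ is forced because $\MEdelta(\Z_+)$ matrices have a unique compatible $\mathcal P^B$-partition (footnote in the excerpt), and the identification $\mathcal P^B \cong \mathcal P = \mathcal P^A$ from part (1) is canonical; so the only freedom would be in relabeling, which the poset isomorphism removes. The $\mathcal P$-partitions of $R$ and $S$ are then determined since their row and column index sets are determined. I would also need to confirm that $A$ itself, as a standard matrix in $\MEdelta(\Z_+)$, is being viewed with its canonical $\mathcal P^A$-partition, so the equations $A^\ell = RS$ etc. hold as $\mathcal P$-partitioned equations — the products $RS$, $SR$, $AR$, $RB$, $BS$, $SA$ are all well-defined in $\MP(\Z_+)$ precisely because the inner index sets match up by construction. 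The main obstacle I anticipate is establishing part (1) cleanly — namely, that the component-level maps induced by $R$ and $S$ are genuinely bijective and order-preserving — without appealing to heavy machinery; the natural route is to exploit that within an irreducible component the relevant block of a high power of $A$ (or $B$) is strictly positive, so that $A^{N\ell} = (RS)^N$ "sees" every component of $A$ and forces $RS$ to act invertibly at the component level, and symmetrically for $B$. Everything else is bookkeeping about supports of block matrices under the partial order.
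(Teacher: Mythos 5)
Your overall plan coincides with the paper's: take the canonical component partitions of $A$ and $B$, let these force the index partitions of $R$ and $S$, build the poset bijection directly from $R$ and $S$, and verify the support (upper-triangularity) conditions by positivity arguments. However, the two steps that carry the real content are not supplied, and one of them is misstated. The defining property of $\phi(p)$ cannot be ``the unique component $q$ of $B$ whose block of $R$ is nonzero'': $R\{p,q\}$ may well be nonzero for many components $q$ (any $q$ lying above $p$ in the order is allowed --- indeed that is exactly the support condition you later want to prove). What is unique is the component $q$ with $R\{p,q\}S\{q,p\}\neq 0$, i.e.\ the single $q$ contributing to the diagonal block $(A^{\ell})\{p,p\}=\sum_q R\{p,q\}S\{q,p\}$, and establishing this uniqueness is the heart of part (1). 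The paper does it by a concrete positivity argument: if two components $q,r$ of $B$ both contributed, one picks index witnesses, joins them inside the irreducible block $A\{p,p\}$ by a suitable power $A^{m}$, and deduces $B^{m+\ell}(j,k)=(SA^{m}R)(j,k)>0$ and symmetrically $B^{m+\ell}(k,j)>0$, forcing $q=r$. Your sketch gestures at ``composing with high powers'' but never produces an argument of this shape, and without it neither the bijectivity of $\phi$ nor the collapse $(B^{\ell})\{q,q\}=S\{q,\phi^{-1}(q)\}R\{\phi^{-1}(q),q\}$ (which you implicitly need when proving $R\{p,q\}\neq 0\implies p\preceq q$) is available.

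Second, order preservation of $\phi$ does not follow from ``$AR=RB$ forces the support of $R$ to be compatible with both partial orders.'' Compatibility of the support of $R$ is precisely the partitionedness statement of part (2); it does not imply $p\preceq r \implies \tilde p\preceq\tilde r$. The needed argument is the sandwich: from $A^{\ell}\{p,p\}A^{m}\{p,r\}A^{\ell}\{r,r\}\neq 0$ (some $m$), rewrite the outer factors as $R\{p,\tilde p\}S\{\tilde p,p\}$ and $R\{r,\tilde r\}S\{\tilde r,r\}$ using the uniqueness above, and conclude $B^{\ell+m}\{\tilde p,\tilde r\}=(SA^{m}R)\{\tilde p,\tilde r\}\neq 0$, hence $\tilde p\preceq\tilde r$. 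Finally, a repairable slip: irreducible diagonal blocks are not ``eventually everywhere-positive'' --- irreducible is weaker than primitive, and periodic components (the cycle components central to this paper) are exactly the case where all high powers have zero entries. The arguments only require that for each index pair within a component some power is positive, and that irreducible blocks have no zero rows or columns, so this can be fixed, but as stated it is incorrect. With these three points repaired, your outline becomes the paper's proof; as written, the uniqueness and order-preservation steps are genuine gaps.
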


\begin{proof}
  Because $A \in \mathcal M^{\delta}(\Z_+)$, there is a unique
  partitioning of indices with $\mathcal I^A = \mathcal J^A$ with
  respect to which $A$ is a $\mathcal P^A$-partitioned matrix.
  The analogous statement likewise holds for $B$.
  Define the unique  index partitions for $R$ and $S$
  which could be compatible (for multiplication of partitioned
  matrices): 
  $\mathcal I^R=\mathcal J^S = \mathcal I^A$ and
  $\mathcal I^S=\mathcal J^R = \mathcal I^B$. 
We make the following claims. 
  \begin{enumerate}
  \item
    (i) 
    For $p$ in $\p^A$, there is a unique  $\tilde p$ in
    $\p^B$ such that $R\{p,\tilde p\}S\{\tilde p,p\} \neq 0$; 

    for $q$ in $\p^B$, there is a unique  $\tilde q$ in
    $\p^A$ such that $S\{q ,\tilde q\}R\{\tilde q,q\} \neq 0$. \\    
  (ii)  $(A^{\ell})\{p, p\} = R\{p, \tilde p\}S\{\tilde p, p\}$
  and
  $(B^{\ell})\{\tilde p, \tilde p\} = S\{\tilde p, p\} R\{p, \tilde p\}$.\ \\ 
  (iii)  For $p\in \p^A$ and $q\in \p^B$:
   $\tilde p =q \iff  \tilde q= p$.

\item The map $p \mapsto \tilde p$ defines a poset isomorphism
  $\p^A \to \p^B$.

\item If $R\{ p, \tilde r\} \neq 0 $ or
  $S\{ \tilde p, r\} \neq 0$, then $ p \preceq r$ in $\mathcal P^A$. 
  \end{enumerate}
  Given the claims, set $\p = \p^A$ and  use (2) to replace the
  $\p_B$ partitions  \eqref{SEequation}  with $\p$ partitions such that
by (2) and (3),   \eqref{SEequation} becomes an equation of $\p$-partitioned
  matrices. 

   It remains to prove the claims.

   (1)(i)  By symmetry, it suffices to prove the first half. 
   Suppose $j\in  \mathcal I^B_q$ and $k\in \mathcal I^B_r$ and there
   are $ i_1, i_2, i_3, i_4$ in $\mathcal I^A_p$ with 
$   R(i_1, j)S(j, i_2) > 0$  and 
   $   R(i_3, k)S(k, i_4) > 0$.
   Because $\{i_2, i_3\} \subset \mathcal I^A_p$,
   there exists $m>0$ such that
   \[
   R(i_1, j)S(j, i_2)A^m(i_2,i_3)R(i_3, k)S(k, i_4) > 0 
   \]
   and therefore
   $
   B^{m+\ell} (j,k) = (SA^mR)(j,k) > 0 
   $.
   Likewise, there exists $m$ such that
   $
   B^{m+\ell} (k,j) = (SA^mR)(k,j) > 0 
   $.
      Therefore,  $q=r$. 

      (ii)       Because $RS=A^{\ell}$ and $SR=B^{\ell}$, these equalities
   follow from (i). 

   (iii)   Suppose  $p\in \p^A$ and $q=\tilde p$.  Then 
   \[
   0 \neq   A^{\ell}\{p,p \} A^{\ell}\{p, p\}
   = R(p, q\} S\{ q, p\}R(p, q\} S\{q, p\} \ , 
   \]
hence  $S\{q, p\}R(p, q\} \neq 0$, and therefore 
 $\tilde q =p$. Similarly, $\tilde q = p  \implies q = \tilde p$. 
     
   (2) By (iii),  $p\mapsto \tilde p$ defines a bijection $\p^A \to \p^B$.
   Suppose  $p \preceq r$ in $\p^A$; 
we will prove $\tilde  p \preceq \tilde r$ in $\p^B$. 
   Because $A \in \MPdelta$,
   we have $m>0$ such that 
   \begin{align*}
     0 \neq   A^{\ell}\{p,p\}A^m\{p,r\}A^{\ell}\{r,r\} 
     =   R\{p, \tilde p\}S\{\tilde p, p\}A^m\{p,r\}
     R\{ r,\tilde r\}S\{\tilde r,r\}  
   \end{align*}
   and therefore
   \[
   B^{\ell +m}\{\tilde p, \tilde r\} = 
   (SA^mR)\{\tilde p, \tilde r\} \neq 0 \ .
   \]
   This implies $\tilde  p \preceq \tilde r$ in $\p^B$.
   Similarly, $q \preceq s$ in $\p^B$ implies
   $\tilde  q \preceq \tilde s$ in $\p^A$.

   (3) Suppose $R\{p, \tilde r \} \neq 0$.
   Then
   \begin{align*} 
     0  \neq   
     R\{p, \tilde r\}B^{\ell}\{\tilde r, \tilde r \}
     &   =
R\{p, \tilde r\}S\{ \tilde r, r\} R\{ r, \tilde q \} 
\\ 
&   \leq    A^{\ell} \{ p,r\} 
 R\{q, \tilde q\} \ .
   \end{align*} 
   It follows that  $A^{\ell}\{p,r\}\neq 0$, and therefore 
   $p \preceq r$. Similarly,
   $S\{\tilde p, r \} \neq 0$ implies
   $0\neq B^{\ell}\{\tilde p, \tilde r\}\neq 0$, hence 
 $p \preceq r$. 

     \end{proof}

  \begin{de} 
We 
defined $\text{El}(\mathcal S)$, $\text{GL}(\mathcal R)$  etc. 
from direct limits of square standard matrices,
with  square 
matrices  identified in the direct limit 
with their ``stabilized'' versions by maps  
  \begin{equation} \label{dlimitgenerators}
 C\  \to \ 
 D=
 \begin{pmatrix} C & 0 \\ 0 & I 
  \end{pmatrix} \ .
  \end{equation}

  For matrices in $\MPE(\mathcal S)$, we generate
  the $\mathcal P$-partitioned direct limits
  $\text{El}_{\p}(\mathcal S)$, $\text{GL}_{\p}(\mathcal R)$ etc.  
 in the same way.
 Precisely,   the  
map $C\to D$ in  \eqref{dlimitgenerators}
 must come with
 an injection $\iota : \mathcal I^C \to \mathcal I^D$
 such that, after replacing each $i$ of $\mathcal I^C$ with $\iota (i)$,
 the following hold.
 \begin{enumerate}
   \item 
  $C$ is a principal submatrix of $D$. 
   \item   $\mathcal I^C_p = \mathcal I^D_p \cap \mathcal I^C_p$,
     for every $p$ in $\p$.
   \item
     If $\{i,j\}\subset \mathcal I^D$ and $\{i,j\} \nsubseteq \mathcal I^C$,
     then
     $D(i,j) = \delta_{ij} $. 
 \end{enumerate} 
  \end{de}
  Note, we could equally well display a $\mathcal P$ stabilization of $C$  
  using $D=\left(\begin{smallmatrix} I&0\\0&C\end{smallmatrix}\right)$;
    there is just a different
    tacit bijection between the index sets $\mathcal I^D_p$ and
    index sets for the displaying matrix.

\begin{rem} When we exhibit a standard matrix to present a 
  $\p$-partitioned matrix,
    the 
    $\p$-partitioning of indices need not be the standard one;
    a standard matrix presenting a square $\mathcal P$-partitioned matrix 
    is only determined up to
    conjugation by a permutation matrix. 
    So, in condition (2) 
    we could just as well have written 
    $\left(\begin{smallmatrix} I&0\\0&B\end{smallmatrix}\right)$
  in place of
  $\left(\begin{smallmatrix} B&0\\0&I\end{smallmatrix}\right)$. 
\end{rem}

  We can now give a $\mathcal P$-partitioned version of
  Theorem \ref{keyprop}.
  (See  Definitions \ref{slpnDefinition} and \ref{defSLPEquiv}
  for the $\text{SL}_{\p}$ terms in the statement of the theorem.)   
 
\begin{thm} \label{keyproppartitioned}
  Suppose $\mathcal R$ is a ring,
  $A\in \MPE(m,\mathcal R)$ and $B\in \MPE(n,\mathcal R)$.
  Let the index sets $\mathcal I^A, \mathcal I^B$ be disjoint. 
Suppose  there is a
  lag $\ell$ $\textnormal{SE}_{\p}-\mathcal R$ given by   
\[
A^{\ell} =RS,\quad B^{\ell}=SR, \quad AR = RB, \quad SA=BS \ .
\]
Then there is
a $\p$-partitioned equation of matrices
\[
  \begin{pmatrix} I-tA & -R \\ 0 & I
  \end{pmatrix}
  \begin{pmatrix} I + (tA) + \cdots +(tA)^{\ell -1} & R \\
    -t^{\ell}S  & I-tB
  \end{pmatrix}
\     = \ 
    \begin{pmatrix} I& 0 \\ -t^{\ell}S  & I -tB 
    \end{pmatrix}
    \]
    in which the first $m$ rows/columns are indexed
    by
    $\mathcal I^A$; 
    the last $n$ rows/columns are indexed
    by
    $\mathcal I^B$; and
       for each matrix $M$ in the equation,
    and each $p$ in $\mathcal P$,
    $I^M_p
    = \mathcal J^M_p= \mathcal I_p^A 
    \cup \mathcal I_p^B$. 

    Let $U$ be the intertwining matrix,
        $U =   \begin{pmatrix} I + (tA) + \cdots +(tA)^{\ell -1} & R \\
  -t^{\ell}S & I-tB  \end{pmatrix}      $; 
    then the following $\mathcal P$-partitioned
PSE (polynomial shift equivalence) equation holds: 
\begin{equation} \label{pseEquationPartitioned}
  \begin{pmatrix} I & 0 \\ t^{\ell}S & I
  \end{pmatrix}
  \begin{pmatrix} I & -R \\ 0 & I
  \end{pmatrix}
    \begin{pmatrix} I-tA & 0 \\ 0 & I
    \end{pmatrix}
    U
    \     = \ 
    \begin{pmatrix} I& 0 \\ 0  & I -tB 
    \end{pmatrix}
    \end{equation} 
If $\ell =1$, then $U$ 
lies in $El_{\mathcal P}(\mathcal R[t])$.
In any case, $U\in GL_{\p}(\mathcal R[t])$. 
If $\mathcal R$ is an integral domain, 
then
$U\in SL_{\p}(\mathcal R[t])$.

In particular, if $\mathcal R = \Z$ then 
$U\in SL_{\p}(\Z [t])$, and the equation \eqref{pseEquationPartitioned} 
gives a stabilized $SL_{\p}(\Z[t])$-equivalence of
$(I-tA)$ and $(I-tB)$.

\end{thm}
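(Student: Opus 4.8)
The plan is to mimic the proof of Theorem \ref{keyprop} verbatim, checking at every step that the matrix operations involved respect the $\p$-partition structure, which is exactly what is needed to stay inside the $\p$-partitioned direct limit groups. First I would observe that since $A\in\MPE(m,\mathcal R)$ and $B\in\MPE(n,\mathcal R)$ are $\p$-partitioned, and the $\textnormal{SE}_{\p}-\mathcal R$ equations \eqref{seequation} are by hypothesis equations of $\p$-partitioned matrices, the matrices $R$ and $S$ carry $\p$-partition data with $\mathcal I^R=\mathcal J^S=\mathcal I^A$, $\mathcal I^S=\mathcal J^R=\mathcal I^B$, and $R\{p,q\}\neq 0$ (resp. $S\{p,q\}\neq 0$) forces $p\preceq q$. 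Consequently every block matrix appearing in the two displayed equations has row and column index set $\mathcal I^A\sqcup\mathcal I^B$, partitioned by putting the $\mathcal I^A_p$-part together with the $\mathcal I^B_p$-part into the $p$-th block; the off-diagonal $-R$, $-t^\ell S$, and the polynomial $\sum(tA)^k$ entries all have the property that their $(p,q)$-block vanishes unless $p\preceq q$, because this holds for $A$, for $R$, and for $S$ individually and is preserved under sums and products of $\p$-partitioned matrices (as recorded in the computation $(AB)\{p,q\}=\sum_{p\preceq r\preceq q}A\{p,r\}B\{r,q\}$ in Section \ref{partitionedSection}). So each factor in \eqref{pseEquationPartitioned} lies in $\MPE(\mathcal R[t])$, and the validity of the two matrix identities is the same computation as in Theorem \ref{keyprop}, carried out blockwise.

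Next I would establish the group membership claims. For $\ell=1$, $U=\bigl(\begin{smallmatrix} I & R \\ -tS & I-tB\end{smallmatrix}\bigr)$ factors, just as in the unpartitioned proof, as a product of the two transvection-type matrices $\bigl(\begin{smallmatrix} I & 0 \\ -tS & I\end{smallmatrix}\bigr)$ and $\bigl(\begin{smallmatrix} I & R \\ 0 & I\end{smallmatrix}\bigr)^{-1}$ — and I would note that these generators are products of transvections each of which is $\p$-partitioned, since $S\{p,q\}=0$ unless $p\preceq q$ and likewise for $R$, so a single nonzero off-diagonal entry $t^\ell S(i,j)$ sits in a block $(p,q)$ with $p\preceq q$. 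Hence these generators lie in $\textnormal{El}_{\p}(\mathcal R[t])$ and so does $U$. For general $\ell$, I would exhibit the explicit inverse $V=\bigl(\begin{smallmatrix} I-tA & -R \\ t^\ell S & I+(tB)+\cdots+(tB)^{\ell-1}\end{smallmatrix}\bigr)$ exactly as in Theorem \ref{keyprop}, check $UV=VU=I$ blockwise, and observe $V\in\MPE(\mathcal R[t])$ by the same partition-compatibility argument; this gives $U\in\textnormal{GL}_{\p}(\mathcal R[t])$. The determinant argument for integral domains is unchanged: $\det U$ is a unit of $\mathcal R[t]$, hence (integral domain) a unit of $\mathcal R$, hence equals its value at $t=0$, which \eqref{pseEquationPartitioned} forces to be $1$; this is a statement about the underlying matrix and does not interact with the partition, so $U\in\textnormal{SL}(\mathcal R[t])$ and, being $\p$-partitioned, $U\in SL(\mathcal R_{\p}[t])$. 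The case $\mathcal R=\Z$ is the specialization, giving $U\in SL_{\p}(\Z[t])$ and the asserted stabilized $SL_{\p}(\Z[t])$-equivalence of $(I-tA)$ and $(I-tB)$.

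The one point that genuinely requires care — and which I expect to be the main obstacle — is bookkeeping of the index sets and partitions so that \eqref{pseEquationPartitioned} is literally an equation in a $\p$-partitioned direct limit, rather than merely an equation of ordinary matrices that happens to be block upper triangular after a choice of ordering. Concretely, I must verify that each $2\times 2$ block matrix in the equation, viewed with row/column index set $\mathcal I^A\sqcup\mathcal I^B$ and $p$-block $\mathcal I^A_p\sqcup\mathcal I^B_p$, genuinely satisfies the defining implication of a $\p$-partitioned matrix; that the products and sums used to verify the identities are all \emph{defined} as $\p$-partitioned operations (matching $\mathcal I$'s and $\mathcal J$'s, which they do since everything shares the index set $\mathcal I^A\sqcup\mathcal I^B$); and that the stabilizations implicit in passing to $\textnormal{El}_{\p}$, $\textnormal{GL}_{\p}$, $SL_{\p}$ are of the admissible form, i.e. new indices $i$ with $D(i,j)=\delta_{ij}$ and each new index assigned to some block of $\p$ consistently. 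None of this is deep, but it is the part where the partitioned setting differs from Theorem \ref{keyprop}, so I would write it out explicitly: the diagonal block $\{p,p\}$ of $U$ is $\bigl(\begin{smallmatrix} (\sum(tA)^k)\{p,p\} & R\{p,p\} \\ -t^\ell S\{p,p\} & (I-tB)\{p,p\}\end{smallmatrix}\bigr)$, an essentially-irreducible-type block, and the $\{p,q\}$ block with $p\not\preceq q$ vanishes entrywise by the three individual vanishing properties — which is the whole content of the claim.
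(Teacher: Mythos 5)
Most of your write-up tracks what is actually needed: the identities are the same computation as in Theorem \ref{keyprop}, the partition-compatibility of each factor follows from the hypothesis that the shift equivalence is an equation of $\p$-partitioned matrices, and exhibiting the $\p$-partitioned inverse $V$ gives $U\in \text{GL}_{\p}(\mathcal R[t])$. The genuine gap is at the one step where the partitioned statement differs in substance from Theorem \ref{keyprop}: membership in $\text{SL}_{\p}$. By Definition \ref{slpnDefinition}, $U\in \text{SL}_{\p}$ requires \emph{every diagonal block} $U\{p,p\}$ to have determinant $1$, not merely $\det U=1$; your sentence ``this is a statement about the underlying matrix and does not interact with the partition, so \dots being $\p$-partitioned, $U\in SL_{\p}$'' is exactly where this is missed. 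A $\p$-partitioned matrix with $\det=1$ need not lie in $\text{SL}_{\p}$ (two diagonal blocks of determinant $-1$ already give a counterexample, since the overall determinant is the product of the diagonal-block determinants), and the blockwise condition is precisely what the application needs: Theorem \ref{feresult} requires an $\slpnz$ equivalence, i.e.\ determinant $1$ on each diagonal block, after setting $t=1$.

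The repair is short and is the paper's one substantive remark: since diagonal blocks of $\p$-partitioned products multiply, $(MN)\{p,p\}=M\{p,p\}N\{p,p\}$, restricting \eqref{pseEquationPartitioned} (and $UV=VU=I$) to the $\{p,p\}$ block gives, for each $p\in\p$, a valid equation of the same shape; running your unit-group/evaluation-at-$t=0$ argument on that restricted equation yields $\det U\{p,p\}=1$ for every $p$, hence $U\in \text{SL}_{\p}(\Z[t])$ (and the integral-domain analogue). Two minor points: your $\ell=1$ factorization has a sign slip --- from $U\left(\begin{smallmatrix} I&-R\\ 0&I\end{smallmatrix}\right)=\left(\begin{smallmatrix} I&0\\ -tS&I\end{smallmatrix}\right)$ one gets $U=\left(\begin{smallmatrix} I&0\\ -tS&I\end{smallmatrix}\right)\left(\begin{smallmatrix} I&R\\ 0&I\end{smallmatrix}\right)$, not a product involving $\left(\begin{smallmatrix} I&R\\ 0&I\end{smallmatrix}\right)^{-1}$ --- which is harmless since all these block transvections and their inverses lie in $\text{El}_{\p}(\mathcal R[t])$; and your observation that they decompose into individual $\p$-compatible transvections (because $S\{q,p\}\neq 0$ forces $q\preceq p$, etc.) is correct and is all that the $\text{El}_{\p}$ claim needs.
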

\begin{proof}
The  claim that the exhibited equations are legal as $\p$-partitioned
equations should be clear, and the other proofs follow those
of  Theorem \ref{keyprop}. To verify the final claim that
$U\in \text{SL}_{\p}(\Z[t])$, 
note that if $M = M_1\cdots M_k$ is a product in $\mathcal M_{\mathcal P}$,
then for every $p$ in $\mathcal P$,
$M\{p,p\} = M_1\{p,p\}\cdots M_k\{ p,p\}$. 
So, the restriction of matrices in 
\eqref{pseEquationPartitioned} to their $\{p,p\}$ principal
submatrices gives an equation of the same form,
which by  Theorem \ref{keyprop} gives a stabilized
$\text{SL}_{\mathcal P}(\Z[t])$ equivalence of
  $(I-tA\{p,p\})$ and   $(I-tB\{p,p\})$. 
\end{proof}

\section{The flow equivalence classification} \label{febackground}


 We recall first the  1983 result of Franks
 classifying
 irreducible SFTs up to flow equivalence. 
 (Franks by construction proved sufficiency of the
 algebraic invariants; necessity was established earlier
 by Parry and Sullivan  \cite{parrysullivan1975}
 and Bowen and Franks \cite{BowenFranks1977}.)
 An $n\times n$ matrix $M$ over $\Z$ defines a map
 $\Z^n \to \Z^n$, and 
by definition  the
 cokernel of $M$ is 
the quotient  $\Z^n /\text{Image}(M)$.
 \begin{footnote}
   {In general, even for a commutative  ring $\mathcal R$, the isomorphism
     class of the cokernel module can depend on whether
     $M$ acts on rows or columns. When $\mathcal R= \Z$,
     the row and column cokernel groups are isomorphic.}
   \end{footnote}

\begin{thm} \label{franks} \cite{franksFlowEq1984} 
  Suppose $A,B$ are irreducible matrices over $\Z_+$ which are
  not permutation matrices. Then the following are equivalent.
  \begin{enumerate}
\item 
    The SFTs $\sigma_A, \sigma_B$ are
    flow equivalent.
  \item The cokernel groups of $I-A$ and $I-B$ are isomorphic,
    and $\det(I-A) = \det(I-B)$. 
      \end{enumerate}
\end{thm}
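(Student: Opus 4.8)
The plan is to prove the two implications separately.

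For $(1)\Rightarrow(2)$ I would show that $\cok(I-A)$ and $\det(I-A)$ are invariants of flow equivalence. Recall (Parry--Sullivan \cite{parrysullivan1975}) that flow equivalence of SFTs is generated by topological conjugacy together with the single move of \emph{symbol expansion}: subdividing an edge $e\colon u\to v$ of a presenting graph by inserting one new vertex. If $\sigma_A$ and $\sigma_B$ are conjugate, then $A,B$ are SSE-$\Z_+$, and applying Theorem \ref{keyprop} (with $\ell=1$, along the chain of elementary strong shift equivalences) and then setting $t=1$ shows that $\left(\begin{smallmatrix} I-A & 0\\ 0 & I\end{smallmatrix}\right)$ and $\left(\begin{smallmatrix} I & 0\\ 0 & I-B\end{smallmatrix}\right)$ are $\mathrm{El}(\Z)$-equivalent; since $\mathrm{El}(\Z)$-equivalence preserves both the cokernel group and the determinant, $\cok(I-A)\cong\cok(I-B)$ and $\det(I-A)=\det(I-B)$. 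For a symbol expansion producing $A'$ one has, after a preliminary conjugacy making the relevant entry a $0$ or $1$,
\[
I-A' \;=\; \begin{pmatrix} I-A+e_u e_v^{\mathsf T} & -e_u \\ -e_v^{\mathsf T} & 1 \end{pmatrix},
\]
and a single transvection on the left (add the last row to row $u$) followed by a single transvection on the right (add the last column to column $v$) turns this into $\left(\begin{smallmatrix} I-A & 0\\ 0 & 1\end{smallmatrix}\right)$, so both invariants are again preserved. Finally $|\cok(I-A)|=|\det(I-A)|$ whenever $\det(I-A)\neq 0$, while $\cok(I-A)$ is infinite when $\det(I-A)=0$; hence, once $\cok(I-A)\cong\cok(I-B)$ is known, ``$\det(I-A)=\det(I-B)$'' is equivalent to the equality of the signs of the two determinants. (This is also where the hypothesis that $A,B$ are not permutation matrices enters: a single periodic orbit has $\cok(I-A)\cong\Z$ and $\det(I-A)=0$, data it shares with genuine SFTs to which it is not flow equivalent.)

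For $(2)\Rightarrow(1)$ the plan has two stages. First, for each admissible pair $(G,\delta)$ --- $G$ a finitely generated abelian group and $\delta\in\Z$ with $|\delta|=|G|$ if $G$ is finite and $\delta=0$ if $G$ is infinite --- I would write down an explicit irreducible nonnegative integer matrix $C=C_{G,\delta}$, not a permutation matrix, with $\cok(I-C)\cong G$ and $\det(I-C)=\delta$: build $I-C$ from the invariant factors of $G$ as an almost-triangular matrix, use one extra entry to pin down the sign of the determinant, and adjoin a return path to force irreducibility (the positive- and negative-determinant cases need somewhat different gadgets, since, e.g., every irreducible non-permutation $2\times2$ matrix has $\det(I-A)\le0$). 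Two such canonical matrices with the same $(G,\delta)$ visibly present flow equivalent SFTs. Second --- and this is the real content --- I would show that every irreducible non-trivial $\sigma_A$ is flow equivalent to $\sigma_{C_{G,\delta}}$ with $G=\cok(I-A)$ and $\delta=\det(I-A)$, by using conjugacies (state splittings and amalgamations) together with symbol expansions to reduce $I-A$ to the shape of $I-C_{G,\delta}$, arguing by induction on the size of the matrix (or on a suitable complexity measure).

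The hard part will be this last reduction. A general $\mathrm{SL}(\Z)$ row/column operation applied to $I-A$ need not preserve the property of being $I-A'$ with $A'$ nonnegative, irreducible and non-trivial, so one cannot simply invoke the Smith normal form of $I-A$; instead one must find a sequence of genuine flow equivalence moves that simultaneously carries out the desired linear-algebra reduction, keeps every intermediate matrix of the form $I-A'$ with $A'\ge 0$, and keeps $A'$ irreducible and non-trivial. One buys positivity by first performing enough symbol expansions to create ``room'' to absorb the elementary operations one wants, and one maintains irreducibility by never destroying a spanning cycle; the bookkeeping lies in verifying that the inverse moves (amalgamations, edge contractions) are legal exactly where they are invoked. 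This is precisely the argument of Franks \cite{franksFlowEq1984}, resting on the invariance results of Parry--Sullivan \cite{parrysullivan1975} and Bowen--Franks \cite{BowenFranks1977}.
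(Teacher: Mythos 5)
The paper does not actually prove this theorem: it is quoted from Franks \cite{franksFlowEq1984}, with the necessity half attributed to Parry--Sullivan \cite{parrysullivan1975} and Bowen--Franks \cite{BowenFranks1977}, so there is no internal proof to compare against. Your outline follows exactly that classical route. Your $(1)\Rightarrow(2)$ argument is correct and essentially complete: the reduction of flow equivalence to conjugacy plus symbol expansion is Parry--Sullivan, the conjugacy case via chained lag-one applications of Theorem \ref{keyprop} at $t=1$ gives an $\mathrm{El}(\Z)$-equivalence of stabilizations of $I-A$ and $I-B$ (a pleasant use of the paper's own machinery), and your two-transvection computation for a symbol expansion does carry $I-A'$ to $\left(\begin{smallmatrix} I-A & 0\\ 0 & 1\end{smallmatrix}\right)$, so both the cokernel and the determinant are preserved; your remark on why permutation matrices must be excluded is also apt. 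The $(2)\Rightarrow(1)$ half, however, is a program rather than a proof: constructing canonical matrices realizing each admissible pair $(G,\delta)$ and, above all, reducing an arbitrary irreducible non-permutation $A$ to the canonical form through moves that stay nonnegative, irreducible and non-trivial is the entire content of Franks' paper, and you explicitly defer to it. Since the paper itself handles both directions by citation, this is a fair treatment, but be aware that the sufficiency direction in your write-up is an attribution, not an argument.
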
 

Suppose $A,B$ are irreducible matrices  over $\Z_+$ which
are not permutation matrices and which are
SE-$\Z_+$ (or even just SE-$\Z$).  
By setting $t$ equal to 1 in
 the PSE
 equation \eqref{pseEquation}, it is easy to deduce from Franks' theorem
 that the SFTs $\sigma_A, \sigma_B$ are flow
 equivalent\begin{footnote}{Before Franks' theorem, it was known that
     the shift equivalence class of $A$ determines
     $\det(I-tA)$
     (see \cite[Corollary 4.8, page 451]{WilliamsOneDim1970})
     and the isomorphism class of the cokernel group of 
     $(I-A)$ (see      \cite[Prop. 1.1]{BowenFranks1977}).
     Thus Franks' theorem showed shift equivalence implies
     flow equivalence in the  case $A$ is irreducible.
     This implication was implicit in the 
     1995 Lind-Marcus book 
 (see \cite[Theorem 6.4.6, Theorem 7.4.17, Corollary 7.4.12 and page
       456]{LindMarcus1995}) 
     and  explicit in the  
     1992 exposition \cite[Section 5.4]{BoyleMatrices1991}. The
     implication is also implicit in Kitchens' 1998 book
     (see \cite[Section 2]{KitchensBook1998}),
     if one adds the remark that $\det(I-tA)$ (and therefore
     $\det(I-A)$) is an invariant of shift equivalence.}\end{footnote}.  
Huang, after papers addressing special cases,
analogously gave a complete (unpublished) 
algebraic invariant for flow equivalence 
of general SFTs, involving
a  ``K-web'' of intertwined exact sequences of finitely generated
abelian groups. 
A different approach in
\cite{BPos2002}  
 characterized  
flow equivalence in terms of a certain
type of equivalence of matrices; 
the passage from that equivalence result to the
 K-web invariants was carried out in
\cite{BHuang2003}. To show SE-$\Z_+$  implies FE, 
we will not  take the path of arguing (analogous to the
appeal in the irreducible case 
to the complete algebraic invariants
in Theorem \ref{franks})
that SE-$\Z_+$  induces an
appropriate isomorphism of $K$-webs.  
Rather, we will be using
Theorem \ref{feresult} below, 
 extracted from the classification theorem 
 \cite[Theorem 3.1]{BPos2002}. (For more about this ``extraction'', 
 see Remark
\ref{feResultNotationDifferences}.)

\begin{de} \label{slpnDefinition} Let $\p$ be a finite
  poset and $\mathcal R$ a commutative ring.
  Then $\text{SL}_{\p}(n, \mathcal R)$ is the
set of $n\times n$ square $\p$-partitioned matrices
  over $\mathcal R$ for which every
  diagonal block has determinant 1. (The subset sharing a given
  $\mathcal P$-partitioning $\{\mathcal I_p\}$ of the 
  index set   is a group.) 
  \end{de} 

\begin{de} \label{defSLPEquiv}
  Let $\p$ be a finite
  poset and $\mathcal R$ a commutative ring.
  An  $\text{SL}_{\p}( \mathcal R)$ equivalence of
   $\p$-partitioned matrices $C,D$  
 is a 
 $\p$-partitioned matrix equation $UCV = D$
  with  $U$ and $V$ in $\text{SL}_{\p}( \mathcal R)$.
  A stabilized $\text{SL}_{\p}(\mathcal R)$ equivalence of
  $A$ and $B$ is an  $\text{SL}_{\p}(\mathcal R)$ equivalence of
  stabilizations
  $\left(\begin{smallmatrix} I-A&0\\0&I\end{smallmatrix}\right)$
  and
  $\left(\begin{smallmatrix} I-B&0\\0&I\end{smallmatrix}\right)$. 
  \end{de}

To discuss the flow equivalence classification of general SFTs, we need
to consider stabilizations, so we need to
consider matrices in $\ME(\Z_+)$ outside $\Mdelta(\Z_+) $.
\begin{de} 
$\MPdeltaess (\Z_+)$ 
is the set of
$A$ in   $\MPE(\Z_+)$ such that 
for all $p,q$ in $\mathcal P$ 
the following hold.   
\begin{enumerate} 
\item 
  $A\{ p,p\}$ is an essentially  irreducible component of $A$.
\item
 $p\preceq q$ in $\p$ if and only if 
   there exist $k>0$,  
   and indices $i$ and    $j$ for the
  irreducible components in $A\{p,p\}$ and $B\{q,q\}$, 
  such that $A^k(i,j) \neq 0$.
\end{enumerate}
\end{de}
{\raggedright If $A\in \MPdeltaess (\Z_+)$, 
then a $\mathcal P$-stabilization
$(I-A) \to  \left(\begin{smallmatrix} I-A&0\\ 0&I
\end{smallmatrix}\right)   := I-A'$
    produces $A'$ in $\MPdeltaess (\Z_+)$.}

    \begin{de} (Cycle components)
      \begin{enumerate}
      \item For $A$ in $\ME(\Z_+)$, 
        an irreducible component is a cycle component if  
        it is a cyclic permutation
        matrix.
      \item
        Suppose  $A\in \MPdeltaess (\Z_+)$ and $p\in \p$.
     We say $p$ is a cycle component of $A$ if 
               $A\{p,p\}$ is essentially  cyclic. 
        This terminology comes from the isomorphism of
        posets $\p \to \p^A$ given by the map which
        sends $p$ to the unique irreducible component of 
        $A\{p,p\}$.
        \end{enumerate} 
\end{de} 

In the flow equivalence classification
for irreducible $A$,
permutation matrices were an exceptional case. 
Likewise for general $A$, 
the cycle components  require
special consideration.

For the next definition,  recall that when  $A$ is essentially
 cyclic, $\cok(I-A)$ is isomorphic to $\Z$ (this will be reproved
 a bit later). 

\begin{de} \label{PosOnCycleEquiv} Suppose $A$ and $B$ are essentially cyclic
  square matrices over $\Z_+$, and $(I-B)= U(I-A)V$ is
  an $\text{SL}(n, \Z)$ equivalence. The induced 
isomorphism of cokernels $\Z^n/(I-A)\Z^n \to 
  \Z^n/(I-B)\Z^n$ given by $[v]\mapsto [Uv]$ is 
  positive 
 if for  canonical basis vectors
  $e$ and $e'$ generating the cokernels we have $[Ue]=[e']$ in
  $\Z^n/(I-B)\Z^n$.
  \end{de} 

Matrices  $A,B$  in $\MPdeltaess(\Z_+)$ have 
 the same cycle components if for every $p$ in $\p$,
 $A\{ p,p \}$ is essentially cyclic if and only if 
 $B\{ p,p \}$ is essentially cyclic.
 In this case,   an $\slpnz $ equivalence of $I-A$ and
 $I-B$ is  ``positive on cycle components'' 
  if for every   cycle component $p$, 
 the  isomorphism $\cok((I-A)\{p,p\}) \to
  \cok((I-B)\{p,p\})$ 
  induced by  
  $(I-B)\{p,p\}  = U\{p,p\}(I-A)\{p,p\}V\{p,p\}$
  is a positive
  isomorphism.

  By Proposition \ref{diagblocksirred}, every SFT is topologically
  conjugate to some $\sigma_A$ such that    $A\in \MEdelta(\Z_+)$.
  So, to show shift equivalence implies flow equivalence,
it will suffice to consider matrices  in $ \MEdelta(\Z_+)$.
\begin{thm}\label{feresult}
    Suppose $A,B$ are square matrices in $\MEdelta(\Z_+)$.  Then
 the following are
  equivalent.
  \begin{enumerate}
\item 
    The SFTs $\sigma_A, \sigma_B$ are
    flow equivalent.
  \item There is a poset $\p$, and $\p$-partitions for $A$ and $B$
    with which  $A$ and $B$ are in $\MPdeltaess(\Z_+)$,  
    such that
    \begin{enumerate}
    \item $A$ and $B$ have the same cycle components, and
      \item 
    there are 
 $\mathcal  P$ stabilizations
    $\left(\begin{smallmatrix} A&0\\0&I\end{smallmatrix}\right)$,
    $\left(\begin{smallmatrix} B&0\\0&I\end{smallmatrix}\right)$
        which are $\slpnz $ equivalent, by an equivalence which
        is positive on cycle components.
    \end{enumerate}
      \end{enumerate}
\end{thm}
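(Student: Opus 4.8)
The plan is to obtain Theorem~\ref{feresult} as a repackaging of the flow equivalence classification \cite[Theorem 3.1]{BPos2002}; the work is translation, not new mathematics. First I would recall the precise form of that classification. Using Proposition~\ref{diagblocksirred} one may, at the cost of replacing $A$ and $B$ by strong shift equivalent (hence conjugate, hence flow equivalent) matrices, assume each is block upper triangular with irreducible diagonal blocks, so that $A,B$ lie in $\MEdelta(\Z_+)$ and carry well-defined posets $\p^A\cong\p^B$ of irreducible components; \cite[Theorem 3.1]{BPos2002} then characterizes flow equivalence of $\sigma_A$ and $\sigma_B$ by the existence of an equivalence of stabilizations of $I-A$ and $I-B$ assembled from a prescribed list of elementary moves, with side conditions governing the action of the moves on diagonal blocks and on blocks arising from cyclic components. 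The poset $\p$ of Theorem~\ref{feresult} is exactly this common component poset, and the block shape of the admissible equivalences is exactly the $\p$-partitioned shape.

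Second, I would identify the two notions of admissible equivalence. The moves in \cite{BPos2002} split into transvections supported off the diagonal blocks, which are unconstrained because they change neither diagonal-block determinants nor the poset, and operations internal to a single diagonal block; since $\text{SL}(k,\Z)=\text{El}(k,\Z)$, the internal operations generate precisely the determinant-$1$ matrices acting on that block. That is exactly the content of Definitions~\ref{slpnDefinition} and~\ref{defSLPEquiv}: an $\text{SL}_{\p}(n,\Z)$-equivalence is an equivalence $UCV=D$ in which every diagonal block of $U$ and $V$ has determinant $1$. Stabilizations appear for the same reason they do in the PSE equation~\eqref{pseEquation}: the natural equivalence lives between stabilized matrices. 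Up to stabilization, then, the equivalences of \cite[Theorem 3.1]{BPos2002} and the $\text{SL}_{\p}(n,\Z)$-equivalences are the same.

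Third, I would account for the cyclic components. An isolated cyclic diagonal block contributes a circle to the mapping torus, and Franks' theorem (Theorem~\ref{franks}) does not apply to permutation matrices; what \cite{BPos2002} isolates in this case is that the induced cokernel isomorphism must send the canonical generator to the canonical generator, which is precisely Definition~\ref{PosOnCycleEquiv}, together with the requirement that cyclic blocks be matched to cyclic blocks (``respects cycle components''). For non-cyclic irreducible diagonal blocks, Franks' theorem shows no positivity is needed, so imposing positivity exactly on the cycle components is correct. With these identifications in hand, conditions (1) and (2) of Theorem~\ref{feresult} become literal restatements of the two sides of \cite[Theorem 3.1]{BPos2002}, in both directions simultaneously.

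The main obstacle is not any one of these steps but the faithful bookkeeping that links them: \cite{BPos2002} works with a particular normal form, a particular choice of acting on rows versus columns, a particular flavor of ``positive'' equivalence, and a particular generating set of moves, and one must check that none of these conventions alters the equivalence relation obtained after stabilization and that the cycle-component and determinant side conditions transport correctly. I would record the precise discrepancies and their resolutions in Remark~\ref{feResultNotationDifferences} rather than in the proof itself; as a consistency check, the passage in \cite{BHuang2003} from the matrix-equivalence invariant of \cite{BPos2002} to the K-web invariant confirms that the matrix-equivalence statement genuinely captures flow equivalence.
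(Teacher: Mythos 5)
Your proposal takes essentially the same route as the paper: Theorem \ref{feresult} is not proved from scratch but extracted from \cite[Theorem 3.1]{BPos2002}, with the poset taken to be the component poset, the admissible equivalences identified with stabilized $\slpnz$ equivalences that respect and are positive on cycle components (Definitions \ref{slpnDefinition}, \ref{defSLPEquiv}, \ref{PosOnCycleEquiv}), and the convention-by-convention bookkeeping delegated to Remark \ref{feResultNotationDifferences}, exactly as the paper does. The one ingredient of the paper's treatment you omit is the correction of the erratum in \cite[Lemma 2.6]{BPos2002} via Proposition \ref{esscyclic} (Remark \ref{minorErratumCorrected}), which is part of making the appeal to that classification sound, but otherwise your account matches the paper's.
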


The next result is the last statement of this section needed
in Section \ref{secMainResultProof}
for the proof of Theorem \ref{maintheorem} (and it is only needed
in the case $A$ and $B$ have cycle components). 
\begin{prop}\label{esscyclicpos}
  Suppose $A,A'$ are essentially
  cyclic matrices and $(U,V)$ gives an $\text{SL}(n,\Z)$ equivalence between 
  $I-A$ and $I-A' =U(I-A)V$. If $U$  is nonnegative, then
  the induced  cokernel isomorphism
  $[v]\mapsto [Uv]$ is positive. 
\end{prop}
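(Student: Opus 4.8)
The plan is to exploit Proposition \ref{esscyclic}, which says that for any \emph{nonnegative} column vector $v$ the class $[v]$ in $\cok(I-A)$ (respectively $\cok(I-A')$) is a nonnegative integer multiple of the chosen generator $[e]$ (respectively $[e']$), because every coefficient appearing in \eqref{poscoefficients} is $\geq 0$. The crucial point is that the generator $e$ produced by Proposition \ref{esscyclic} is itself a canonical basis vector of $\Z^n$; hence $Ue$ is literally a column of $U$, and since $U$ is nonnegative, $Ue$ is a nonnegative vector.

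First I would fix canonical basis vectors $e,e'$ with $\cok(I-A)=\Z[e]$ and $\cok(I-A')=\Z[e']$, ordered so that the positive cones are $\Z_+[e]$ and $\Z_+[e']$, as in the paragraph preceding the proposition. Next I would record that $[v]\mapsto[Uv]$ is a well-defined group isomorphism $\cok(I-A)\to\cok(I-A')$: from $I-A'=U(I-A)V$ we get $U(I-A)=(I-A')V^{-1}$, so $U$ carries $(I-A)\Z^n$ into $(I-A')\Z^n$, and the map is invertible because $U\in\text{GL}(n,\Z)$.

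Then I would compute. Since $Ue\geq 0$, applying Proposition \ref{esscyclic} to $A'$ gives $[Ue]=c\,[e']$ for some integer $c\geq 0$. On the other hand, $[v]\mapsto[Uv]$ is an isomorphism and $[e]$ generates $\cok(I-A)$, so $[Ue]$ must generate $\cok(I-A')\cong\Z$; hence $c=\pm 1$, and with $c\geq 0$ and $c\neq 0$ we conclude $c=1$, i.e. $[Ue]=[e']$. Consequently the isomorphism carries $\Z_+[e]$ onto $\Z_+[e']$, which is exactly the statement that it is positive (equivalently, that it meets the condition of Definition \ref{PosOnCycleEquiv}).

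I do not expect a real obstacle here; the only steps needing care are the first one -- arranging that the generator furnished by Proposition \ref{esscyclic} is a standard basis vector, so that $U\geq 0$ forces $Ue\geq 0$ -- and the bookkeeping of which block decomposition ($A$ versus $A'$) a vector belongs to when one invokes the formula \eqref{poscoefficients}.
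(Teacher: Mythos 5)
Your proof is correct and follows essentially the same route as the paper: fix canonical basis vectors $e,e'$ generating the two cokernels, observe that $Ue$ is a nonnegative column of $U$, and apply Proposition \ref{esscyclic} (for $A'$, in $A'$'s block decomposition) to conclude $[Ue]$ is a nonnegative multiple of $[e']$. You in fact go one small step further than the paper's written proof by noting explicitly that, since $[v]\mapsto[Uv]$ is an isomorphism of infinite cyclic groups, the nonnegative multiple must be exactly $1$, so $[Ue]=[e']$ as Definition \ref{PosOnCycleEquiv} requires.
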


We now develop some notation used in the proof of a lemma, and
then the proof of Proposition \ref{esscyclicpos}. 
Consider a   matrix $A$ over $\Z_+$ which is
essentially cyclic (i.e., there is a unique maximal irreducible
component, and it is a permutation matrix). 
After passing to a matrix conjugate to $A$ by a permutation,
we have standard matrices with the block forms 
\begin{align*} 
A & = \begin{pmatrix} N_1 & X & Y \\ 0 & P & Z \\ 0 & 0 & N_2
\end{pmatrix} \\
I-A &= \begin{pmatrix} I-N_1 & -X & -Y \\ 0 & I-P & -Z \\ 0 & 0 & I-N_2
\end{pmatrix}
\end{align*} 
with $N_1, N_2 $ nonnegative nilpotent, and zero on and below their diagonals,
and $P$ a cyclic permutation matrix.
(It can also happen that the first and/or third blocks do not arise. 
 I will write for the displayed form; 
arguments are easily adjusted for the other cases.)

Let $A$ as above be $n\times n$, with $n =a+b+c$, where 
$N_1$ is $a \times a$,  $P$ is $b\times b$ and $N_2$ is
$c\times c$. Let matrices act on column vectors
with integer entries. For typographical ease, we will write a column
vector $v$ in $\Z^n$ as $v= (x,y,z)$ with $x \in \Z^a$, $y\in \Z^b$
and $z\in \Z^c$.
For a column vector
$v$ in $\Z^n$, $[v]$ is the corresponding element of the
cokernel group $G=\Z^n/(I-A)\Z^n$. When we say $v=v'$ in $G$,
we mean $[v] = [v']$.

\begin{lem}\label{esscyclic} (We use the terminology of the
  last two paragraphs.) 
  There exists a standard basis vector $e$ such that 
  the cokernel group $G=\Z^n/(I-A)\Z^n$ equals $\Z[e]$,
  isomorphic to $\Z$. 
  There are nonnegative integers $n_1, \dots , n_c$ such that for
  all $(x,y,z)$, 
  \begin{equation} \label{poscoefficients}
    [ (x,y,z)] =( y_1 + \dots + y_b +n_1 z_1 + \dots +
n_c z_c)[e] \ .
\end{equation} 
\end{lem}
\begin{proof}
  Every $(x,0,0)$ is trivial in $G$, because $(I-N_1)$ is invertible,
  and if $x' = (I-N_1)^{-1}x$ then 
  $I-A: (x',0,0)\to (x,0,0)$.

  If $e_i$ is a canonical basis vector for $\Z^b$, then
  $(I-A)e_i$ has the form $(v,e_i -Pe_i,0)$, which equals
  $(0,e_i-Pe_i, 0)$ in $G$. Because $P$ is a cyclic permutation,
  we conclude that for any two  canonical basis vectors $e_i, e_j$
  of $\Z^b$, $(0,e_i,0) = (0,e_j,0)$ in $G$. Fix some canonical basis
  vector $e_i$ of $\Z^b$ and set $e=(0,e_i,0)$. 
  Then for 
  $y$ in $\Z^b$, 
  we have $(0,y,0) =  Me$ in $G$, where
  $M = \sum_{1\leq i \leq b} y_b$. 

  Now let $\ell > 0$ be such that  $(N_2)^{\ell +1} =0$.
  Then $(I-N_2)$ is invertible, with inverse
  $U = I+N_2 + \cdots +(N_2)^{\ell}$, which is a nonnegative
  matrix. Suppose $e_k$ is a canonical basis vector for $\Z^c$. 
Then 
$I-A: (0,0,Ue_k) \mapsto (-YUe_k, -ZUe_k, e_k)$ 
and therefore  $(0, 0, e_k) = (0, ZUe_k, 0)$ in $G$. 
Let $n_k$ be the sum of the entries of the nonnegative vector
$ZUe_k$; then  $e_k = n_k e$ in $G$. 

This shows that $[e]$ 
is a generator of $G$, and \eqref{poscoefficients} holds. 
Because $\det(I-A)=0$, the group $G$ is infinite.
Therefore $\Z [e]$ is isomorphic to  $\Z$. 
  \end{proof}

\begin{proof}[Proof of Prop. \ref{esscyclicpos}]
  Suppose $e,e'$ are canonical basis vectors
  such that $[e]$ and $[e'] $  are generators
  of the  cokernels of $(I-A)$ and $(I-A')$. 
  In the notation of  Lemma \ref{esscyclic},
  let $Ue_i= (x,y,z)$, the $i$th column of $U$.
  There are nonnegative integers $n_j$
  such that 
  $[Ue] = (\sum_i y_i  + \sum_j n_j z_j)[e']$ in
  $\cok (I-A')$.   Because $U$ induces an isomorphism
  of the cokernels, 
$\sum_i y_i  + \sum_j n_j z_j$ must be 1 or -1. Because 
$U$  is nonnegative, the sum is 1.
     \end{proof}

\begin{rem}
  Of course we could have worked with row cokernels in place of
  column cokernels. 
  It is interesting that one only needs to track the
  positive-on-cycles condition on one side, not for
  both row and column cokernels.
\end{rem}

\begin{rem}\label{minorErratumCorrected}
  Here we correct a minor erratum in  \cite{BPos2002}.

Lemma 2.6 of \cite{BPos2002} is a version of 
Lemma \ref{esscyclic}.
 Lemma 2.6 contains an error: 
Lemma 2.6(3)
says (in the language of Lemma \ref{esscyclic})
that $[(e_i,0,0)] = [(0,0,e_k)] = 0$, i.e.
$ [(x,y,z)] =( \sum_i y_i)[e]$.
Nevertheless, after replacing Lemma 2.6
with Lemma \ref{esscyclic} above, 
the proofs in 
\cite{BPos2002} 
relying on Lemma 2.6
 go through.
Lemma 2.6 is used at two points in \cite{BPos2002}:  
\begin{enumerate}
\item In the proof of  \cite[Proposition 2.7(2)]{BPos2002},
  where our Proposition \ref{esscyclicpos} applies. 
  
  \item 
    In  \cite[Lemma A.3,line 5]{BPos2002},
    to show $a=1$. This follows easily from
    Lemma \ref{esscyclic}.
\end{enumerate}
\end{rem}

\begin{rem} \label{feResultNotationDifferences}
  Theorem \ref{feresult} is obtained from the equivalence of (1) and (3)
  in \cite[Theorem 3.1]{BPos2002}, as indicated below. 
  \begin{enumerate}

  \item
    It is  natural to visualize elements of the 
    stabilized general linear group 
    $\text{GL} (\mathcal R)$ as infinite ``matrices''
     equal to the identity in all but finitely many
     entries
     (e.g. \cite[p. 522]{VasilovStepanov2011}). 
      In this spirit, in \cite{BPos2002}  elements of the direct limit group
  $\text{SL}_{\mathcal P} (\Z)$ were formally described using
  ``matrices'' with infinite index sets (equal to the identity
  matrix in all but finitely many entries);  
   correspondingly, 
  matrices  presenting SFTs were allowed to have infinite
  index sets, and required to be nonzero in only finitely many entries.
  In the current paper, we follow a more standard convention,
  and express results only using
 matrices with finite index sets. 
\item
  A matrix $A$ in $\MPdeltaess(\Z_+)$ can be presented as a
  standard matrix, with the sets $\mathcal I^A_p$ being successive
  integers (so, a submatrix $A\{p,q\}$ is a block in the
  block triangular structure). One can check that
  the $\p$-blocked standard matrices derived in this way from
  matrices in   $\MPdeltaess(\Z_+)$  are
  (apart from the
infinite vs. finite matrix difference discussed above)
the matrices in  the class
$\mathfrak M^o_{\mathcal P,+}(\Z)$ which appears in
the statement of 
\cite[Theorem 3.1]{BPos2002}. 
\item
In \cite[Theorem 3.1]{BPos2002}, 
for matrices $A,A'$ in $\mathfrak M^o_{\mathcal P,+}(\Z),
\mathfrak M^o_{\mathcal P',+}(\Z)$
(respectively) 
there is a poset isomorphism $\nu: \mathcal P \to \mathcal P'$
inducing a permutation $P$ of indices such that
there is a positive stabilized $\text{SL}_{\mathcal P}(\Z)$
equivalence of $(I-A)$ and $I-(P^{-1}A'P)$.
In our current notation, this replaces some chosen
$\mathcal P'$ partitioning $\{ \mathcal I^B_q: q\in \mathcal P'\}$
with a $\mathcal P$-partitioning
$\{ \mathcal I^B_{\nu(p)}: p\in  \mathcal P\}$.
\item
  If   $A\in \MEdelta(\Z_+)$, then for $\p = \p^A$, with the
  natural $\p$ partitioning of indices we have 
  $A\in \MPdelta(\Z_+)$. If $I-A'$ is any $\p$-stabilization of
  $I-A$, 
  then $A'\in \MPdeltaess(\Z_+)$. Thus
  \cite[Theorem 3.1]{BPos2002} applies to the matrices
  considered in Theorem \ref{feresult}. 
  \end{enumerate}
  \end{rem}

\section{SE implies FE for SFTs} \label{secMainResultProof}

In this section, we give the proof for Theorem \ref{maintheorem}. 

\begin{proof}[Proof of Theorem \ref{maintheorem}]
  Suppose $A$ and $B$ are standard nonnilpotent matrices,
  $m\times m$ and $n\times n$, 
and there is a shift equivalence over $\Z_+$, 
  \begin{equation}\label{seEQN}
A^{\ell} =RS,\quad B^{\ell}=SR, \quad AR = RB, \quad SA=BS \ .  
\end{equation} 
We will show that $\sigma_A$ and $\sigma_B$ are flow equivalent.
Because topologically conjugate SFTs are (obviously) flow equivalent,
it follows from Proposition \ref{diagblocksirred} that without
loss of generality we may assume
$A$ and $B$ are block upper triangular with every diagonal block
an irreducible matrix. 
Then by Proposition \ref{fromSEtoSEP}, 
with $\mathcal P=\mathcal P^A$, there is a 
 unique choice of 
 $\p$-partitions for 
$B,R,S$ such that the equation \eqref{seEQN} defines
an $SE_{\p }-\Z_+$. 
By Theorem \ref{keyproppartitioned}, we then get the
$\p$-partitioned PSE equation 
\begin{equation} 
  \begin{pmatrix} I & 0 \\ t^{\ell}S & I
  \end{pmatrix}
  \begin{pmatrix} I & -R \\ 0 & I
  \end{pmatrix}
    \begin{pmatrix} I-tA & 0 \\ 0 & I
    \end{pmatrix}
    U
    \     = \ 
    \begin{pmatrix} I& 0 \\ 0  & I -tB 
    \end{pmatrix}
    \end{equation} 
with 
$U \in SL_{\p}(m+n, \Z [t])$.
Setting $t=1$, we obtain an
$\text{SL}_{\p}(m+n, \Z) $ equivalence of
stabilizations of $I-A$ and $I-B$: 

\begin{equation} \label{stabilizedSLequivalenceat1}
  \begin{pmatrix} I & 0 \\ S & I
  \end{pmatrix}
  \begin{pmatrix} I & -R \\ 0 & I
  \end{pmatrix}
    \begin{pmatrix} I-A & 0 \\ 0 & I
    \end{pmatrix}
\begin{pmatrix}  W & R \\
  -S & I-B  \end{pmatrix} 
    \     = \ 
    \begin{pmatrix} I& 0 \\ 0  & I- B 
    \end{pmatrix}
    \end{equation} 
in which $W=  I+ A + \cdots +A^{\ell -1} $ and 
the determinant of $\begin{pmatrix} W & R \\
  -S & I-B  \end{pmatrix}$ is 1.

Consequently, it will follow from  
Theorem \ref{feresult} that $\sigma_A$ and $\sigma_B$ are
flow equivalent, if we can verify that
the equivalence \eqref{stabilizedSLequivalenceat1}
is positive on cycle components.
So, suppose $ p$ is a cycle component. For a $\p$-partitioned
matrix $E$, let $E_p$ denote $E\{ p,p \}$.
The equivalence \eqref{stabilizedSLequivalenceat1} induces an isomorphism
$\cok\left(\begin{smallmatrix} I-A&0\\0&I\end{smallmatrix}\right)_p
  \to
  \cok\left(\begin{smallmatrix} I&0\\0&I-B\end{smallmatrix}\right)_p
$ 
  defined by the rule $[v]\mapsto [Mv]$, with 
 $M = 
  \left(\begin{smallmatrix} I&-R\\0&I\end{smallmatrix}\right)_p
\left(\begin{smallmatrix} I&0\\S&0\end{smallmatrix}\right)_p$. 
  Let $A_p$ be $a\times a$, let $B_p$ be $b\times b$, and write
  a vector in $\Z^{a+b}$ as $\left(\begin{smallmatrix} x \\ y
    \end{smallmatrix}\right)$ where $x\in \Z^a$ and $y\in \Z^b$. 
The map $\left(\begin{smallmatrix} x \\ y
\end{smallmatrix}\right) \to
\left(\begin{smallmatrix} I & -R\\ 0&I 
    \end{smallmatrix}\right)
\left(\begin{smallmatrix} x \\ y
\end{smallmatrix}\right)$ induces an isomorphism
$\phi_1: \cok\left(\begin{smallmatrix} I-A&0\\0&I\end{smallmatrix}\right)_p
  \to
  \cok\left(\begin{smallmatrix} I-A&-R\\0&I\end{smallmatrix}\right)_p
    $
with $[\left(\begin{smallmatrix} x \\ y
\end{smallmatrix}\right)] \mapsto 
[\left(\begin{smallmatrix} x  -Ry \\ y
  \end{smallmatrix}\right)]$.
Because 
$[\left(\begin{smallmatrix} x \\ y
  \end{smallmatrix}\right)]
=
[\left(\begin{smallmatrix} x \\ 0
  \end{smallmatrix}\right)]
$ in 
$ \cok\left(\begin{smallmatrix} I-A&0\\0&I\end{smallmatrix}\right)_p$,
  and $\phi_1: [\left(\begin{smallmatrix} x \\ 0
\end{smallmatrix}\right)] \mapsto 
[\left(\begin{smallmatrix} x   \\ 0
  \end{smallmatrix}\right)]$, 
  the isomorphism $\phi_1$ is certainly positive.
  Let $C = \left(\begin{smallmatrix} A&R\\0&0\end{smallmatrix}\right)$.
    Then $C_p$ is an essentially cyclic matrix over $\Z_+$,
    and the map $v\mapsto 
    \left(\begin{smallmatrix} I&0\\S&I\end{smallmatrix}\right)_pv $
      induces an isomorphism
      $\phi_2:
      \cok\left(\begin{smallmatrix} I-A&-R\\0&I\end{smallmatrix}\right)_p
        \to 
        \cok\left(\begin{smallmatrix} I&0\\0&I-B\end{smallmatrix}\right)_p $.
          Because
          $\left(\begin{smallmatrix} I&0\\S&I\end{smallmatrix}\right)_p $
is nonnegative, it follows from             
Proposition \ref{esscyclicpos} that
$\phi_2 $ is a positive isomorphism. Therefore $\phi$, as the composition
of positive isomorphisms, is a positive isomorphism. 
  This finishes the proof.
\end{proof} 

\begin{rem} One reality check for the proof is
  to consider the ``positive on cycles'' condition to be defined for
  row vectors, and check that 
  the multiplication on row vectors by
$\left(\begin{smallmatrix} W& R\\-S &I-B\end{smallmatrix}\right)$
  induces an equivalence positive on cycles. This does 
  hold,  because the matrix  $R$ is nonnegative.
\end{rem}

\begin{rem} \label{brixetc} Suppose matrices $A,B$ are in $\MPE(\Z_+)$.
  It can happen that they are   $\text{SE}_{\p}-\Z$,
 with the same cycle components,
  but still define $\sigma_A$, $\sigma_B$
  which are not flow equivalent -- so, one possible proof strategy
  for showing shift equivalence implies flow equivalence cannot work. 
  (I thank Kevin Brix for pointing this out to me.)
  However, from the proof above of Theorem \ref{maintheorem}, 
  we see that
if $A$ and $B$ are $\text{SE}_{\p}-\Z$ and have no cycle components,
  then  $\sigma_A$ and $\sigma_B$ are flow equivalent. 
  \end{rem}

\begin{qu} 
Is there  an example of two subshifts which are eventually
conjugate (or even just eventually flow equivalent) and are
not flow equivalent?
\end{qu} 

\section{Eventually conjugate systems which are not flow equivalent}
\label{secExample}

For completeness, we will construct a (rather degenerate)
example of 
systems (not subshifts) which are eventually conjugate but are
not flow equivalent. To clarify the main idea,  suppose $S_m$ is the union of
$m$ fixed points and for all $k >1$ countably many orbits of size $k$,
all accumulating on a single fixed point.
For each $k>1$, the system $(S_m)^k$ contains countably many orbits
of every finite size, all accumulating on a fixed point.
The systems $S_m$ are eventually conjugate but
not conjugate.  We have to complicate this
construction a bit to get eventually conjugate systems which also
are not flow equivalent. 

Given a countably infinite collection 
$\{ (X_n, T_n): n\in \N\}$  of systems 
such that each $T_n$ has an infinite dense orbit, 
let $X^*$ be the one point compactification of the
disjoint union of the 
$X_n$, $X^* = \{ \infty\} \cup (\sqcup_n X_n)$. We define the
 one point compactification $T^*: X^* \to X^*$ of the
 $T_n$ by letting
 $\infty$ be a fixed point of $T^*$.
 Let $Y^*$ be the mapping torus over $T^*$ and let $Y_n$ be the
 mapping torus over $T_n$.

 We claim that $\{Y_n: n\in \N\}$ is the collection of maximal closed connected
 subsets of $Y^*$ which are not circles. To see this, suppose
 $n\in \N$,  and note the following.  
 \begin{enumerate}
 \item
   A closed connected subset of $Y^*$ is either contained in $Y_n$ 
   or is disjoint from $Y_n$.
   \item 
     $Y_n$ is a closed connected subset of $Y^*$ (because the dense
     infinite orbit      of $T_n$ produces a dense immersed line
     in $Y_n$).
   \item
     $Y_n$ is not a circle.
   \item
     $Y^*$ is the union of the $Y_n$ and the circle over $\{\infty\}$.
 \end{enumerate}
Now suppose $((X')^*,(T')^*)$ is the one point compactification
of another such collection $(X'_n, T'_n)$.
Let $Y'_n,(Y')^*$ denote the associated mapping tori. 
Suppose $\phi: Y^* \to (Y')^*$ is an orientation preserving
 homeomorphism  giving a flow equivalence of
 $T^*$ and $(T')^*$. From the claim, we deduce
 that there must be a bijection $\alpha: \N \to \N$
 such that for every $n$, $\phi (Y_n) = Y'_{\alpha(n)} $,
   so $T_n$ is flow equivalent to $T'_{\alpha(n)}$.

 Given a system $(X,T)$ and $h$ in $\N$, the height $h$
discrete suspension of $T$ is the homeomorphism
\begin{align*}
  T_h  : X \times \{0, \dots , h-1\}
  &\to X \times \{0, \dots , h-1\}\\
  (x,i) & \mapsto (x,i+1) \quad \text{ if } i<h-1 \\
  (x,h-1) &\mapsto (Tx, 0)\ .
\end{align*}
For all $h$, $T_h$ is flow equivalent to $T$.

Now let $(X,T)$ be a system (for example, the 2-shift)
with a dense orbit, such that
for $k>1$, $T$ is not flow equivalent to $T^k$. Let
$T_{\ell, h}$ denote $(T^{\ell})_h$.
For $m$ in $\N$, let $T^*_{(m)}$ be the one point compactification
of $m$ disjoint
copies of $T$ together with,
for all $\ell > 1$ and $h\geq 1$, a countably infinite collection of  copies
of $T_{\ell , h}$. No
such $T_{\ell , h}$  is
flow equivalent to $T$. So, if $m\neq n$, then
$T^*_{(m)}$ is not flow equivalent to $T^*_{(n)}$.

Suppose $k>1$. Then,  $(T^*_{(m)})^k$ is the one point compactification
of $m$ copies of $T^k$ together with the $k$th powers of the 
countably infinite collection of copies
of $T_{\ell, h}$, for  $\ell > 1$ and $h\geq 1$.
  Because $(T_{k,k})^k$ is the disjoint union of
  $k$ copies of $T^k$, 
$(T^*_{(m)})^k$ 
  contains
  countably many copies of $T^k$, regardless of $m$.
Thus for $k>1$, all the $(T^*_{(m)})^k$ are
conjugate. So, if $m\neq n$, then
$T^*_{(m)}$ is eventually conjugate
but not flow equivalent to $T^*_{(n)}$.

\bibliographystyle{plain}

\bibliography{reducible}

\begin{thebibliography}{10}

\bibitem{BowenFranks1977}
Rufus Bowen and John Franks.
\newblock Homology for zero-dimensional nonwandering sets.
\newblock {\em Ann. of Math. (2)}, 106(1):73--92, 1977.

\bibitem{BW04}
M.~Boyle and J.~B. Wagoner.
\newblock Positive algebraic {$K$}-theory and shifts of finite type.
\newblock In {\em Modern dynamical systems and applications}, pages 45--66.
  Cambridge Univ. Press, Cambridge, 2004.

\bibitem{BoyleJordanForm1984}
Mike Boyle.
\newblock Shift equivalence and the {J}ordan form away from zero.
\newblock {\em Ergodic Theory Dynam. Systems}, 4(3):367--379, 1984.

\bibitem{BoyleMatrices1991}
Mike Boyle.
\newblock Symbolic dynamics and matrices.
\newblock In {\em Combinatorial and graph-theoretical problems in linear
  algebra ({M}inneapolis, {MN}, 1991)}, volume~50 of {\em IMA Vol. Math.
  Appl.}, pages 1--38. Springer, New York, 1993.

\bibitem{BAlgAspects2000}
Mike Boyle.
\newblock Algebraic aspects of symbolic dynamics.
\newblock In {\em Topics in symbolic dynamics and applications ({T}emuco,
  1997)}, volume 279 of {\em London Math. Soc. Lecture Note Ser.}, pages
  57--88. Cambridge Univ. Press, Cambridge, 2000.

\bibitem{BPos2002}
Mike Boyle.
\newblock Flow equivalence of shifts of finite type via positive
  factorizations.
\newblock {\em Pacific J. Math.}, 204(2):273--317, 2002.

\bibitem{bce:fei}
Mike Boyle, Toke~Meier Carlsen, and S{\o}ren Eilers.
\newblock Flow equivalence and isotopy for subshifts.
\newblock {\em Dyn. Syst.}, 32(3):305--325, 2017.
\newblock Corrigendum: ibid, page (ii).

\bibitem{BHuang2003}
Mike Boyle and Danrun Huang.
\newblock Poset block equivalence of integral matrices.
\newblock {\em Trans. Amer. Math. Soc.}, 355(10):3861--3886, 2003.

\bibitem{BoSc1}
Mike Boyle and Scott Schmieding.
\newblock Strong shift equivalence and algebraic {K}-theory.
\newblock {\em J. Reine Angew. Math.}, 752:63--104, 2019.

\bibitem{BSCHStableAlg}
Mike Boyle and Scott Schmieding.
\newblock Symbolic dynamics and the stable algebra of matrices.
\newblock In Peter J.~Cameron R.A.Bailey and Yaokun Wu, editors, {\em Groups
  and graphs, designs and dynamics}, volume 491 of {\em London Mathematical
  Society Lecture Note Series}, pages 266--422. Cambridge University Press,
  2024.

\bibitem{cuntzkrieger}
Joachim Cuntz and Wolfgang Krieger.
\newblock A class of {$C\sp{\ast} $}-algebras and topological {M}arkov chains.
\newblock {\em Invent. Math.}, 56(3):251--268, 1980.

\bibitem{EilersReport2022}
S{\o}ren Eilers.
\newblock The {W}illiams problem through the lens of {C}untz-{K}rieger
  algebras.
\newblock {\em Oberwolfach Rep.}, 19(3):2111--2113, 2022.
\newblock Abstracts from the workshop held August 7--13, 2022, Organized by
  Dimitri Shlyakhtenko, Andreas Thom, Stefaan Vaes and Wilhelm Winter.

\bibitem{errs:complete}
S{\o}ren Eilers, Gunnar Restorff, Efren Ruiz, and Adam P.~W. S{\o}rensen.
\newblock The complete classification of unital graph {$C^*$}-algebras:
  geometric and strong.
\newblock {\em Duke Math. J.}, 170(11):2421--2517, 2021.

\bibitem{Fitting1936}
Hans Fitting.
\newblock \"{U}ber den {Z}usammenhang zwischen dem {B}egriff der
  {G}leichartigkeit zweier {I}deale und dem \"{A}quivalenzbegriff der
  {E}lementarteilertheorie.
\newblock {\em Math. Ann.}, 112(1):572--582, 1936.

\bibitem{franksFlowEq1984}
John Franks.
\newblock Flow equivalence of subshifts of finite type.
\newblock {\em Ergodic Theory Dynam. Systems}, 4(1):53--66, 1984.

\bibitem{S21}
K.~H. Kim and F.~W. Roush.
\newblock Williams's conjecture is false for reducible subshifts.
\newblock {\em J. Amer. Math. Soc.}, 5(1):213--215, 1992.

\bibitem{S11}
K.~H. Kim and F.~W. Roush.
\newblock The {W}illiams conjecture is false for irreducible subshifts.
\newblock {\em Ann. of Math. (2)}, 149(2):545--558, 1999.

\bibitem{KitchensBook1998}
Bruce~P. Kitchens.
\newblock {\em Symbolic {D}ynamics}.
\newblock Universitext. Springer-Verlag, Berlin, 1998.
\newblock One-sided, two-sided and countable state Markov shifts.

\bibitem{LindMarcus1995}
Douglas Lind and Brian Marcus.
\newblock {\em An introduction to symbolic dynamics and coding}.
\newblock Cambridge University Press, Cambridge, 1995.

\bibitem{LindMarcus2021}
Douglas Lind and Brian Marcus.
\newblock {\em An {I}ntroduction to {S}ymbolic {D}ynamics and {C}oding}.
\newblock Cambridge University Press, Cambridge, second edition, 2021.

\bibitem{parrysullivan1975}
Bill Parry and Dennis Sullivan.
\newblock A topological invariant of flows on {$1$}-dimensional spaces.
\newblock {\em Topology}, 14(4):297--299, 1975.

\bibitem{VasilovStepanov2011}
N.~A. Vavilov and A.~V. Stepanov.
\newblock Linear groups over general rings. {I}. {G}eneralities.
\newblock {\em Zap. Nauchn. Sem. S.-Peterburg. Otdel. Mat. Inst. Steklov.
  (POMI)}, 394:33--139, 295, 2011.

\bibitem{Warfield1978}
R.~B. Warfield, Jr.
\newblock Stable equivalence of matrices and resolutions.
\newblock {\em Comm. Algebra}, 6(17):1811--1828, 1978.

\bibitem{WilliamsOneDim1970}
R.~F. Williams.
\newblock Classification of one dimensional attractors.
\newblock In {\em Global {A}nalysis ({P}roc. {S}ympos. {P}ure {M}ath., {V}ols.
  {XIV}, {XV}, {XVI}, {B}erkeley, {C}alif., 1968)}, volume XIV-XVI of {\em
  Proc. Sympos. Pure Math.}, pages 341--361. Amer. Math. Soc., Providence, RI,
  1970.

\end{thebibliography}

\end{document}